\theoremstyle{plain}
 \newtheorem{thm}{Theorem}[section]
 \newtheorem{lem}{Lemma}[section]
 \newtheorem{cor}{Corollary}[section]
\theoremstyle{definition}
 \newtheorem{defn}{Definition}[section]
\renewcommand{\d}{\,\operatorname{d}}
\newcommand{\eps}{\varepsilon}
\newcommand{\half}{\tfrac{1}{2}}
\newcommand{\origin}{\textbf{o}}
\newcommand{\quarter}{\tfrac{1}{4}}
\newcommand{\xx}{\textbf{x}_1}
\newcommand{\yy}{\textbf{x}_2}
\newcommand{\Cell}{\mathcal{C}}
\newcommand{\Expect}[1]{\operatorname{\mathbb{E}}\left[#1\right]}
\newcommand{\Leb}{\operatorname{Leb}}
\newcommand{\Prob}[1]{\operatorname{\mathbb{P}}\left[#1\right]}
\newcommand{\Reals}{\mathbb{R}}
\numberwithin{equation}{section}  
\begin{document}

\title{Return to the Poissonian City} 

\author{Wilfrid S.~Kendall\footnote{Work supported in part by EPSRC Research Grant  EP/K013939.}\\ {\scriptsize \url{w.s.kendall@warwick.ac.uk}}} 

 \date{\text{ }}

\maketitle

\begin{abstract}
Consider the following random spatial network: in a large disk, 
construct a network using a stationary and isotropic Poisson line process of unit intensity.
Connect pairs of points using the network, with initial / final segments of the connecting path formed by travelling off the network in the opposite direction to that of the destination / source.
Suppose further that connections are established using ``near-geodesics'', constructed between pairs of points using the perimeter of the cell containing these two points and formed using only the Poisson lines not separating them.
If each pair of points generates an infinitesimal amount of traffic divided equally between the two connecting near-geodesics, and if the Poisson line pattern is conditioned to contain a line through the centre,
then what can be said about the total flow through the centre? 
In earlier work (``Geodesics and flows in a Poissonian city'', \emph{Annals of Applied Probability}, 21(3), 801--842, 2011) 
it was shown that a scaled version of this flow had asymptotic distribution given by the \(4\)-volume of a region
in \(4\)-space, constructed using an improper anisotropic Poisson line process in an infinite planar strip.
Here we construct a more amenable representation in terms of two ``seminal curves'' defined by the improper Poisson line process, 
and establish results which produce a framework for effective simulation from this distribution up to an \(L^1\) error which tends to zero with increasing computational effort.
\end{abstract}


\noindent
\emph{1991 Mathematics Subject Classification.} 60D05, 90B15.
\\
\emph{Key words and phrases.} 
\textsc{improper anisotropic Poisson line process; mark distribution; 
point process; Poisson line process; Poissonian city network; Mecke-Slivynak theorem; 
seminal curve;
spatial network; traffic flow.}

\section{Introduction}\label{sec:introduction}
What can be said about flows in a random network? 
Aldous, McDiarmid, and Scott \cite{AldousMcDiarmidScott-2009} discuss maximum flows achievable on a complete graph with random capacities;
Aldous and Bhamidi \cite{AldousBhamidi-2010} consider the joint distribution of edge-flows in a complete network with independent exponentially distributed edge-lengths. 
But what can be said about flows in a suitable random \emph{spatial} network?
In previous work with Aldous \cite{AldousKendall-2007}, 
it was shown that sparse Poisson line processes could be used to augment minimum-total-length networks connecting fixed sets of points in such a manner that
(a) the total network length is not appreciably increased, but
(b) the average network distance between two randomly chosen points exceeds average Euclidean distance by only a logarithmic excess.
This result debunks an apparently natural network efficiency statistic, but also indicates attractive features of networks formed using Poisson line processes.

The analysis of \cite{AldousKendall-2007} uses the notion of ``near-geodesics'' as noted in the abstract: these are 
paths constructed between pairs of points using the perimeter of the cell containing these two points and formed using only the Poisson lines not separating them.
Follow-up work 
in \cite{Kendall-2011b} introduces the notion of a ``Poissonian city'',
namely a planar disk of radius \(n\), connected by a random pattern of lines from a stationary and isotropic Poisson line process. 
Pairs of point in the disk are connected by near-geodesics, with initial / final segments of the connecting path formed by travelling off the network in the opposite direction to that of the destination / source.
Conditioning on one of the Poisson lines passing through the centre, and supposing that each pair of points in the disc generates an infinitesimal flow shared equally between two near-geodesics derived from the line pattern,
it can be shown that mean flow at the centre is asymptotic to \(2n^3\); moreover the scaled flow has a distribution which converges to a proper non-trivial distributional limit \cite[Section 3]{Kendall-2011b}. Thus the asymptotic flow at the centre of this random spatial network is well-behaved.
The distribution can be realized in terms of the \(4\)-volume of an unbounded region in \(\Reals^4\) determined by an improper anisotropic Poisson line process defined on an infinite strip; 
however it is a challenge to compute directly with this representation. 
In this paper we show how to represent the volume of this region in terms of a pair of monotonic concave curves (``seminal curves''); 
moreover we establish results which demonstrate that a calculation in terms of initial segments of these seminal curves can be used to approximate the \(4\)-volume up to an explicit \(L^1\) error, which can be made as small as desired.

The paper is organized as follows: in the next section, Section \ref{sec:city}, the Poissonian city and the improper line process are defined; 
Section \ref{sec:seminal} describes the representation in terms of seminal curves;
Section \ref{sec:dynamics} discusses the stochastic dynamics of a seminal curve;
and Section \ref{sec:flow} applies this to determine explicit \(L^1\) error bounds. 
The paper concludes with Section \ref{sec:conclusion}, a brief discussion which mentions an open question related to exact simulation.

\section{Traffic in the improper Poissonian city}\label{sec:city}
A ``Poissonian city'' \cite{Kendall-2011b} is a disk of radius \(n\) with connectivity supplied by lines from a unit-intensity stationary and isotropic Poisson line process.
Recall that such a line process has intensity \(\tfrac12 \d r\,\d\theta\), where the (undirected) lines are parametrized by angle \(\theta\in[0,\pi)\)
and signed distance \(r\) from the origin. (The factor \(\frac12\) ensures that the number of hits on a unit segment has unit mean.) 
Traffic flow in the Poissonian city is supplied by so-called ``near-geodesics'', constructed between pairs of points using the perimeter of the cell containing these two points and formed from lines not separating them.
Short Euclidean connections can be added \cite[Section 1.2]{Kendall-2011b} so as to connect up any pair of points whatsoever, whether the points lie on or off the Poisson line pattern. 
Conditioning on a line running through the origin \(\origin\), one can then study the flow through \(\origin\) which results
if each pair of points contributes the same infinitesimal amount of flow divided equally between two alternate near-geodesics \cite[Section 3]{Kendall-2011b}. 
Asymptotics at \(n\to\infty\) are studied using the limit obtained by considering \(x\to x/n\) together with \(y\to y/\sqrt{n}\): 
the result in the limit is an ``improper Poissonian city'' formed from an improper anisotropic Poisson line process observed within an infinite strip of width \(2\).
Coupling and symmetry arguments are used to show the asymptotic mean flow in the centre is \(2 n^3\) (with limiting distribution when scaled accordingly), 
corresponding to a mean flow at the centre of \(2\) in the improper Poissonian city conditioned to have a horizontal line through \(\origin\) \cite[Theorems 5 and 7]{Kendall-2011b}.

In this section we give a direct description of the improper Poisson city and its associated improper anisotropic Poisson line process,
now observed in the whole plane \(\Reals^2\).
We first consider the intensity measure for the improper line process, using a natural choice of coordinates,
namely the heights of intercepts on the two boundary lines of the infinite strip.
\begin{defn}\label{def:improper}
 Consider the lines in \(\Reals^2\) which are not vertical (which is to say, not parallel to the \(y\)-axis), 
and parametrize these lines by their intercepts \(y_\pm\) on the \(x=\pm1\) axes.
Denote by  \(\Pi_\infty\) the improper anisotropic Poisson line process, composed only of non-vertical lines, 
whose intensity measure \(\nu\) is given in the coordinates \(y_-\) and \(y_+\) by
\begin{equation}\label{eq:improper1}
 \d\nu \quad=\quad \frac{\d y_- \d y_+}{4}\,.
\end{equation}
\end{defn}

Limiting and coupling arguments show that \(\Pi_\infty\) arises as the limiting line process on the infinite strip for large \(n\)
if a Poissonian city within a disc of radius \(n\) is subject to inhomogeneous scaling
\(x\to x/n\) together with \(y\to y/\sqrt{n}\). 
The factor \(\tfrac14\) arises 
(a) from the factor \(\tfrac12\) in the formula for the intensity measure of the Poisson line process of unit intensity given above, 
(b) from the choice of coordinates determined by the two boundary lines of the strip, which are separated by distance \(2\) 
(contrast the expressions for \(\nu\) in other coordinate systems discussed at the start of Section \ref{sec:dynamics} below).

This line process is improper only in the sense of possessing a dense infinity of nearly vertical lines:
once one removes from the line pattern all lines with absolute slope greater than a fixed constant, then the result is locally finite.

\begin{figure}
\includegraphics[height=2in]{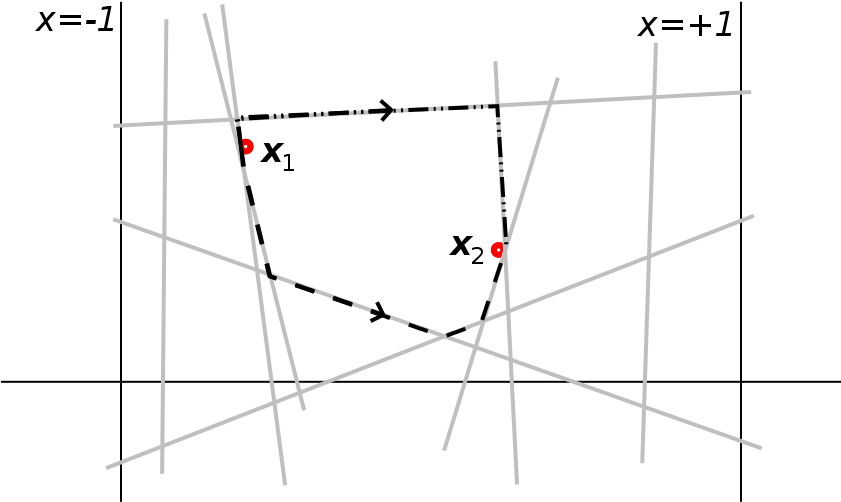}
\centering
 \caption{Indicative illustration of the construction of near-geodesics between points \(\xx=(x_1,y_1)\)  and \(\yy=(x_2,y_2)\) in the improper Poissonian city. 
The two broken lines indicate the pair of near-geodesics between the two points.
Necessarily this indicative illustration omits most of the dense countably infinite set of near-vertical lines contained in \(\Pi_\infty\);
it also omits all lines separating \(\xx\) and \(\yy\).}
\label{fig:city}
\end{figure}

It is immediate from \eqref{eq:improper1} that \(\Pi_\infty\) is statistically invariant under translations, shears along the \(y\)-axis,
and reflections in \(x\) and \(y\) axes.
Calculations also show its statistical invariance under symmetries of the form \(y\to c y\) together with \(x\to c^2 x\) for \(c\neq0\)
(these symmetries are exploited in \cite[Section 3]{Kendall-2011b}).

Following \cite{AldousKendall-2007,Kendall-2011b}, 
we use \(\Pi_\infty\) to construct paths between distinct points \(\xx\) and \(\yy\) in \(\Reals^2\)
which can be thought of as ``near-geodesics'' in the network supplied by \(\Pi_\infty\), 
and correspond to near-geodesics in the original Poissonian cities under the coupling arguments referred to above. 
This construction is illustrated in Figure \ref{fig:city}.
 
\begin{defn}\label{def:near-geodesic}
Fix \(\xx\), \(\yy\in\Reals^2\) and
consider the tessellation generated by all the lines of \(\Pi_\infty\) which do not separate \(\xx\) and \(\yy\). 
Let \(\Cell(\xx,\yy)\) be the (open) tessellation cell
whose closure is the intersection of all the closed half-planes that are bounded by lines of \(\Pi_\infty\) and that
contain both \(\xx\) and \(\yy\). 
The pair of \emph{near-geodesics} between \(\xx\) and \(\yy\) is given by the two connected components obtained by removing \(\xx\) and \(\yy\)
from the perimeter \(\partial\Cell(\xx,\yy)\). 
\end{defn}

In contrast to the case of \cite{AldousKendall-2007,Kendall-2011b}
(where initial / final segments of the connecting path have to be formed \emph{off} the Poisson line process, by travelling in the opposite direction to that of the destination / source), 
the points \(\xx\) and \(\yy\) belong to the closed set \(\partial\Cell(\xx,\yy)\), since there are
infinitely many nearly vertical lines arbitrarily close to \(\xx\) (respectively \(\yy\)) which do not separate \(\xx\) and \(\yy\).

We use these near-geodesics to define a flow over the whole plane \(\Reals^2\): 
the infinitesimal flow between \(\xx\) and \(\yy\) amounts to the infinitesimal quantity \(\d\xx\d\yy\),
and this is divided equally between the two near-geodesics between \(\xx\) and \(\yy\).
We focus attention on the total amount of flow passing through the origin \(\origin\) that is produced by pairs of points
lying on the infinite vertical strip \(\{\mathbf{x}=(x,y):-1<x<1\}\), when
we condition on there being a horizontal line \(\ell^*\in\Pi_\infty\) which passes through \(\origin\) (so in fact \(\ell^*\) is the \(x\)-axis).
Under this conditioning, the total flow of interest is given by
\begin{equation}\label{eq:totalflow}
T \quad=\quad
\int_{-\infty}^\infty\int_{-1}^{+1} \int_{-\infty}^\infty\int_{-1}^{x_2} 
  \half\mathbb{I}_{[\origin\in\partial\Cell((x_1,y_1),(x_2,y_2))]}
\d x_1\d y_1\d x_2\d y_2\,.
\end{equation}
Here the factor of \(\tfrac12\) allows for the splitting of the flow between the two possible near-geodesics.
Note for future use the Slivynak-Mecke theorem \cite[Example 4.3]{ChiuStoyanKendallMecke-2013}: 
if \(\Pi_\infty\) is so conditioned then \(\Pi_\infty\setminus\{\ell^*\}\) is distributed as the original unconditioned improper anisotropic Poisson line process.
So from henceforth the construction of near-geodesics, as in Definition \ref{def:near-geodesic}, 
is based on \(\Pi_\infty\cup\{\ell^*\}\) rather than \(\Pi_\infty\). 

An interaction between the improper nature of \(\Pi_\infty\) and its statistical symmetries can be used to simplify somewhat the quantity \eqref{eq:totalflow}.
Firstly, if one of \(\xx\) or \(\yy\) lies in the open upper half-plane and the other lies in the open lower half-plane,
then the perimeter \(\partial\Cell(\xx,\yy)\) of the (convex) cell 
will almost surely (in \(\xx\) and \(\yy\)) not contain \(\origin\). 
This is a consequence of the horizontal translation symmetry of the statistics of \(\Pi_\infty\cup\{\ell^*\}\). 
Accordingly we can divide the multiple integral \eqref{eq:totalflow} into two non-zero and identically distributed parts, 
integrating respectively over \(y_1>0\) and \(y_2>0\), and \(y_1<0\) and \(y_2<0\).
We shall see in the next section that the contributions from these two parts are independent.

Suppose \(\xx\) and \(\yy\) both lie in the open right-hand half of the open upper half-plane, namely \(\{(x,y):x>0,y>0\}\). 
Since the improper line process \(\Pi_\infty\) contains infinitely many arbitrarily steep lines with \(x\)-intercepts dense on the \(x\)-axis; 
it follows that the near-geodesics between such \(\xx\) and \(\yy\) cannot pass through \(\origin\), 
and so such configurations cannot contribute to \eqref{eq:totalflow}. 
Similarly, no contribution can be made from configurations in which \(\xx\) and \(\yy\) both lie in the left-hand half of the upper half-plane, 
namely \(\{(x,y):x<0,y>0\}\).

Accordingly, the properties of \eqref{eq:totalflow} will follow from analysis of 
\begin{equation}\label{eq:upperflow}
 F \quad=\quad \int_{Q_+}\int_{Q_-} \half\mathbb{I}_{[\origin\in\partial\Cell(\xx,\yy)]}\d\xx\d\yy\,;
\end{equation}
where \(Q_+=\{(x,y): 0<x<1, y>0\}\) while \(Q_-=\{(x,y): -1<x<0, y>0\}\).
In fact the quantity in \eqref{eq:totalflow} will be the independent sum of two copies of \(F\),
one for the upper and one for the lower half-plane.
In \cite{Kendall-2011b} this representation is used to establish some general properties of the flow at the centre.
However it is desirable to construct a representation of \(F\) more amenable to quantitative arguments and effective approximation.
We will now show how to do this.

\section{Separation and seminal curves}\label{sec:seminal}
We focus on the upper half-plane case, and the \(4\)-volume \(2F\) of the subset \(\mathcal{D}^\text{upper}\subset Q_-\times Q_+\) given by
\begin{equation*}
 \mathcal{D}^\text{upper}\quad=\quad\{(\xx, \yy) \in Q_-\times Q_+: \origin \in \partial\Cell(\xx,\yy)\}\,.
\end{equation*}
Thus \(\mathcal{D}^\text{upper}\) is composed of point-pairs \((\xx, \yy)\in Q_-\times Q_+\) 
such that the line segment connecting \(\xx\) with \(\yy\) is not separated from \(\origin\) by \(\Pi_\infty\).
Note that such separation fails if and only if
no one line \(\ell\in\Pi_\infty\) simultaneously separates \(\xx\) and \(\yy\) from \(\origin\).

Consider a dual construction, using the lines of \(\Pi_{\infty}\), that builds sets of lines of positive and negative slope which could in principle separate the origin \(\origin\)
and some
line segment between some \(\xx\in Q_-\) and some \(\yy\in Q_+\):
\begin{align*}
 \Pi_{\infty,+} \quad&=\quad\{\ell\in\Pi_\infty: \ell\text{ has positive slope}, \ell\text{ intercepts negative \(x\)-axis }\}\,,\\
 \Pi_{\infty,-} \quad&=\quad\{\ell\in\Pi_\infty: \ell\text{ has negative slope}, \ell\text{ intercepts positive \(x\)-axis }\}\,.
\end{align*}
The lines relevant to the case of \(\xx\) and \(\yy\) lying in the lower half-plane lie in \(\Pi_\infty\setminus(\Pi_{\infty,-}\cup\Pi_{\infty,+})\):
hence 
(as mentioned in Section \ref{sec:city})
the total flow \eqref{eq:totalflow} is indeed the sum of two independent copies of the upper half-plane contribution \(2F\), for \(F\) as
specified in \eqref{eq:upperflow}.

Now define the \emph{seminal curves} \(\Gamma_\pm\) as the concave lower envelopes of the unions of lines in \(\Pi_{\infty,\pm}\): for \(s\in(0,1]\),
\begin{align}
 \Gamma_-(-s)\quad&=\quad\inf\{\text{ height of intercept of }\ell\text{ on }x=-s\;:\;\ell\in\Pi_{\infty,-}\}\,, \label{eq:gamma-}\\
 \Gamma_+(s)\quad&=\quad\inf\{\text{ height of intercept of }\ell\text{ on }x=s\;:\;\ell\in\Pi_{\infty,+}\}\,.   \label{eq:gamma+}
\end{align}
These are illustrated in Figure \ref{fig:seminal}.
It is immediate that both curves are concave and continuous, and that \(\Gamma_-\) is strictly monotonically decreasing, \(\Gamma_+\) is strictly monotonically increasing.
Therefore the inverses \(\Gamma_-^{-1}(\eps)\), \(\Gamma_+^{-1}(\eps)\) are well-defined for \(0<\eps\leq\Gamma_-(-1)\), \(0<\eps\leq\Gamma_+(1)\) respectively:
it is convenient to adopt the convention that \(\Gamma_\pm^{-1}(\eps)=\pm1\) for \(\eps>\Gamma_\pm(\pm1)\).

\begin{figure}
\includegraphics[height=2in]{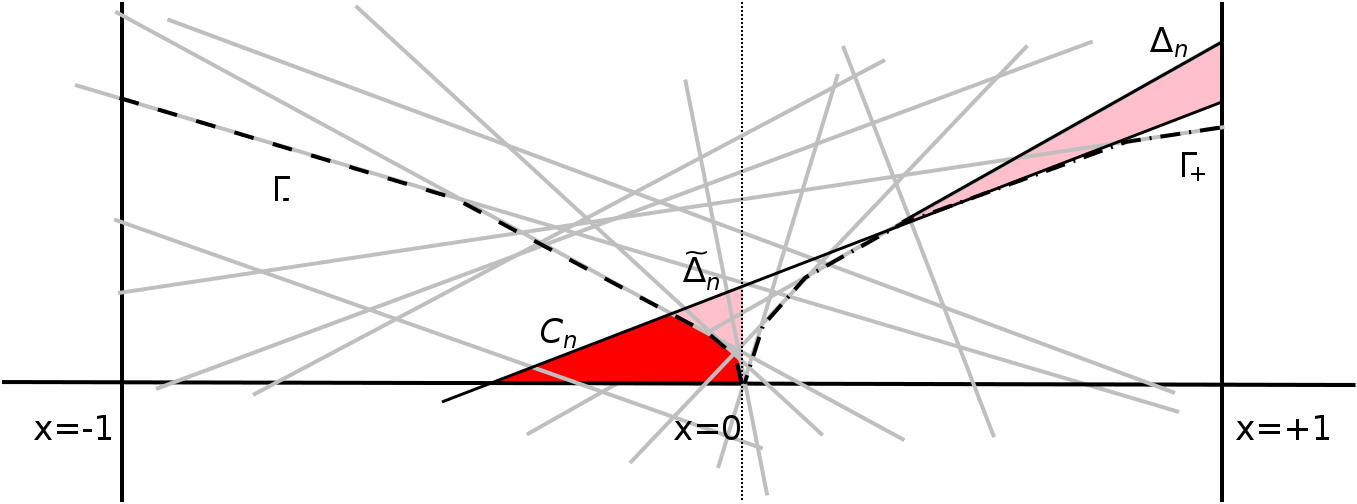}
\centering
 \caption{The two seminal curves \(\Gamma_-\) and \(\Gamma_+\), and the regions \(\Delta_n=\Delta^+_n\), \(C_n=C^+_n\) and \(\widetilde{\Delta}_n=\widetilde{\Delta}^+_n\).
Note that \(\Delta_n\) and \(\widetilde{\Delta}_n\) are triangular regions determined using only lines that are components of \(\Gamma_+\), and the \(x\)-axis, and the \(x=1\) axis. 
The region \(C_n\) is contained in the triangular region \(\widetilde{\Delta}_n\), and uses lines that are components either of \(\Gamma_+\) or of \(\Gamma_-\), as well as the \(x\)-axis.
Note that in fact \(\Gamma_\pm\) have vertical asymptotes at \(0\).}
\label{fig:seminal}
\end{figure}

A simple lower bound for the quantity \eqref{eq:upperflow} arises from the observation that
\begin{equation}\label{eq:area0}
 \left\{(x,y)\in Q_-: 0<y<\Gamma_-(x)\right\}\times\left\{(x,y)\in Q_+: 0<y<\Gamma_+(x)\right\}\;\subset\; \mathcal{D}^\text{upper}\,.
\end{equation}
It may be deduced that
\begin{equation}\label{eq:approx0}
 \left(\int_{-1}^0 \Gamma_-(s)\d s\right)\times\left(\int_0^1 \Gamma_+(s)\d s\right)
\quad<\quad \Leb_4(\mathcal{D}^\text{upper})\quad=\quad 2F\,.
\end{equation}
Evidently it is feasible to approximate both \(\int_{-1}^0 \Gamma_-(s)\d s\) and \(\int_0^1 \Gamma_+(s)\d s\) to within
an additive absolute error of \(\eps>0\) using only finitely many lines from \(\Pi_\infty\), 
namely the lines involved in the initial segments \((\Gamma_-(s):-1\leq s\leq \Gamma_-^{-1}(\eps))\) and \((\Gamma_+(s):\Gamma_-^{-1}(\eps))\leq s \leq 1)\). 
We will see in Section \ref{sec:flow} how this leads to effective approximation.

We therefore turn attention to the difference between the two sides of \eqref{eq:approx0}, 
equivalently the \(4\)-volume of the difference between the two regions in \eqref{eq:area0}.
The difference region splits into two disjoint parts whose definitions are related by the mirror symmetry around the \(y\)-axis. 
First observe that if \(\xx=(x_1,y_1)\in Q_-\) lies above \(\Gamma_-\),
and \(\yy\in Q_+\) lies above \(\Gamma_+\), 
then the line segment connecting \(\xx\) with \(\yy\) is separated from \(\origin\). 
Indeed we can use any line realizing the infimum in the definition of \(\Gamma_-(x_1)\) 
(or any analogous line realizing the infimum in the definition of \(\Gamma_+(x_2)\)).
So we can fix attention on the case when \(\yy=(x_2,y_2)\) lies above \(\Gamma_+\) while \(\xx=(x_1,y_1)\) lies below \(\Gamma_-\),
and use mirror symmetry to deal with the opposite case. 
Consider the lines \(\ell_0\), \(\ell_1\), \(\ell_2\), \ldots of \(\Pi_{\infty,+}\) which are components of \((\Gamma_+(s):0<s\leq1)\), 
enumerated according to the increasing heights of their intercepts on \(x=1\).  
Then \(\origin\in\partial\Cell(\xx,\yy)\) if and only if any \(\ell_n\) lying below \(\yy\) has to pass above \(\xx\).
As a consequence of the infimum-based definition \eqref{eq:gamma+} of \(\Gamma_+\)
and of the concavity of \(\Gamma_+\), 
\(\ell_{n+1}\) must intersect \(\ell_n\), 
and it must do so at a larger \(x\)-coordinate than where it intersects \(\Gamma_+\).
Using concavity and monotonicity of \(\Gamma_+\), it may be deduced that the intercept of \(\ell_n\) on the \(x=x_1<0\) axis must be decreasing in \(n\).  
Let \(n(\yy)\) be the largest \(n\) such that \(\ell_n\) lies below \(\yy\): 
then the set \(C_{n(\yy)}^+\) of \(\xx\) with \(\origin\in\partial\Cell(\xx,\yy)\) is
exactly the set of those points in \(Q_-\) which lie below \(\Gamma_-\) and also below \(\ell_{n(\yy)}\).
Let \(\Delta^+_n\) be the triangle formed by \(\ell_n\), \(\ell_{n+1}\), and the \(x=1\) axis. Figure \ref{fig:seminal} illustrates the definition of these regions,
as well as the further region \(\widetilde{\Delta}^+_n\) to be defined below.

Let \(C_n^-\) and \(\Delta^-_n\) be the analogous regions for lines that are components of \(\Gamma_-\). 
Evidently the areas of both \(C_n^-\) and \(C_n^+\) for any fixed \(n\) can be approximated to within an additive absolute error of \(\eps>0\),
using only the lines involved in \((\Gamma_-(s):-1\leq s\leq \Gamma_-^{-1}(\eps))\) and \((\Gamma_+(s):\Gamma_-^{-1}(\eps)\leq s \leq 1)\),
and the same is trivially true of the triangles \(\Delta^\pm_n\).

It now follows that we can represent \(F\) in \eqref{eq:upperflow} in a way that lends itself to effective approximation so long as we have a useful
representation of the curves \(\Gamma_\pm\) viewed as continuous piecewise-linear random processes: we summarize this in a theorem:
\begin{thm}\label{thm:full-answer}
Given the analysis below of \(\Gamma_\pm\) as continuous piecewise-linear random processes,
\begin{multline}\label{eq:full-answer}
2F\quad=\quad \int_{Q_+}\int_{Q_-} \mathbb{I}_{[\origin\in\partial\Cell(\xx,\yy)]}\d\xx\d\yy
\quad=\quad
 \left(\int_{0}^1 \Gamma_-(-s)\d s\right)\times\left(\int_{0}^1 \Gamma_+(s)\d s\right)\\
+ \sum_{n=0}^\infty \Leb_2(C^+_n) \Leb_2(\Delta^+_n)
+ \sum_{n=0}^\infty \Leb_2(C^-_n) \Leb_2(\Delta^-_n)
\end{multline} 
enables an effective computation of the left-hand side \(2F\): 
indeed, finite truncations of the convergent infinite sums use calculations based on only finitely many of the lines involved in the constructions of \(\Gamma_\pm\).
\end{thm}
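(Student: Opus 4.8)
The plan is to prove \eqref{eq:full-answer} as a purely pathwise identity, valid for almost every realisation of $\Pi_\infty$; the distributional input enters only through the a.s.\ finiteness of $2F$ and, for the quantitative error bounds of Section~\ref{sec:flow}, through the dynamics of Section~\ref{sec:dynamics}. Recalling $2F=\Leb_4(\mathcal{D}^{\text{upper}})$, I would decompose $\mathcal{D}^{\text{upper}}$ along $Q_-\times Q_+$ according to the position of $\xx$ relative to $\Gamma_-$ and of $\yy$ relative to $\Gamma_+$: up to a $\Leb$-null set (the graphs of the a.s.\ continuous piecewise-linear curves $\Gamma_\pm$ being null), $Q_-\times Q_+$ is the disjoint union of $R_{bb},R_{ba},R_{ab},R_{aa}$, according as $\xx$ lies below/above $\Gamma_-$ and $\yy$ lies below/above $\Gamma_+$. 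By the separation criterion recorded in Section~\ref{sec:seminal} --- for $(\xx,\yy)\in Q_-\times Q_+$, $\origin\notin\partial\Cell(\xx,\yy)$ iff some line of $\Pi_{\infty,+}\cup\Pi_{\infty,-}$ passes strictly below both $\xx$ and $\yy$ --- one has $R_{bb}\subset\mathcal{D}^{\text{upper}}$ (this is exactly \eqref{eq:area0}, so \eqref{eq:approx0} supplies the product term of \eqref{eq:full-answer}), while $R_{aa}$ contributes nothing, a point-pair with $\xx$ above $\Gamma_-$ and $\yy$ above $\Gamma_+$ being separated from $\origin$ (Section~\ref{sec:seminal}). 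So $2F$ is the product term plus $\Leb_4(R_{ba}\cap\mathcal{D}^{\text{upper}})+\Leb_4(R_{ab}\cap\mathcal{D}^{\text{upper}})$, and since mirror reflection in the $y$-axis interchanges the families $(Q_-,\Gamma_-,\Pi_{\infty,-},C^-_n,\Delta^-_n)$ and $(Q_+,\Gamma_+,\Pi_{\infty,+},C^+_n,\Delta^+_n)$ while fixing $\mathcal{D}^{\text{upper}}$, it suffices to evaluate $\Leb_4(R_{ba}\cap\mathcal{D}^{\text{upper}})$.

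For this I would foliate over $\yy$. By the analysis of Section~\ref{sec:seminal}, for fixed $\yy\in Q_+$ above $\Gamma_+$, with $n(\yy)$ the largest index for which the component line $\ell_{n(\yy)}$ of $\Gamma_+$ lies below $\yy$, the $\xx$-section of $\mathcal{D}^{\text{upper}}$ over $\yy$ is exactly $C^+_{n(\yy)}$ (whose definition already incorporates the constraint that $\xx$ lie below $\Gamma_-$, so this is also the $\xx$-section of $R_{ba}\cap\mathcal{D}^{\text{upper}}$). As $\yy\mapsto n(\yy)$ is piecewise constant, Tonelli's theorem gives
\begin{multline*}
 \Leb_4\bigl(R_{ba}\cap\mathcal{D}^{\text{upper}}\bigr)
 \;=\;\int_{\{\,\yy\in Q_+\,:\,\yy\text{ above }\Gamma_+\,\}}\Leb_2\bigl(C^+_{n(\yy)}\bigr)\d\yy\\
 \;=\;\sum_{n=0}^\infty\Leb_2(C^+_n)\,\Leb_2\bigl(\{\,\yy\in Q_+\,:\,n(\yy)=n\,\}\bigr)\,.
\end{multline*}
The substantive new step is to identify $\{\yy\in Q_+:n(\yy)=n\}$ with the triangle $\Delta^+_n$, and this is where I expect the main work to lie. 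The key is that, for each fixed $x_2\in(0,1)$, the heights $m\mapsto\ell_m(x_2)$ of the extended component lines are \emph{unimodal}, with least value $\Gamma_+(x_2)$ attained at the index of the component active over $x_2$; this follows from the concavity of $\Gamma_+$ together with the enumeration of the $\ell_m$ by increasing intercept on $x=1$ (equivalently, increasing slope), via the facts that $\ell_m$ and $\ell_{m+1}$ cross at the right-hand endpoint of the interval on which $\ell_{m+1}$ is active and that these endpoints decrease in $m$. Granting unimodality, $n(\yy)=n$ holds exactly when $\ell_n(x_2)<y_2<\ell_{n+1}(x_2)$, which forces $x_2$ to exceed the $x$-coordinate of $\ell_n\cap\ell_{n+1}$; the locus so described is precisely the open triangle bounded by $\ell_n$, $\ell_{n+1}$ and the axis $x=1$, i.e.\ $\Delta^+_n$, and any such $\yy$ lies above $\Gamma_+$ automatically since $y_2>\ell_n(x_2)\ge\Gamma_+(x_2)$. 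Hence $\Leb_4(R_{ba}\cap\mathcal{D}^{\text{upper}})=\sum_{n\ge0}\Leb_2(C^+_n)\Leb_2(\Delta^+_n)$; the mirror image gives $\sum_{n\ge0}\Leb_2(C^-_n)\Leb_2(\Delta^-_n)$ for $R_{ab}$; and adding the three contributions delivers \eqref{eq:full-answer}.

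For the concluding effectiveness assertion: all terms on the right of \eqref{eq:full-answer} are nonnegative and their partial sums are bounded by $2F$, which is a.s.\ finite (indeed of finite mean, \cite[Section 3]{Kendall-2011b}); so both series converge and the error in truncating at $N$ terms equals a tail tending to $0$. Each $\Leb_2(\Delta^\pm_n)$ is an explicit function of the two lines $\ell_n,\ell_{n+1}$, while each $\Leb_2(C^\pm_n)$ depends on one such line and on the opposite seminal curve, but only through its behaviour away from the vertical asymptote at $0$, the omitted part near $0$ having arbitrarily small area (cf.\ the approximation remark following \eqref{eq:approx0}); so each $\Leb_2(C^\pm_n)$ is computable, to within any prescribed error, from finitely many of the lines building $\Gamma_\mp$. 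Thus an $N$-truncation of \eqref{eq:full-answer} is computable, up to arbitrarily small additional error, from finitely many of the lines underlying $\Gamma_\pm$; the quantitative form --- an explicit $L^1$ bound combining the truncation and the two approximations --- I would leave to Section~\ref{sec:flow}, since it rests on the stochastic description of $\Gamma_\pm$ of Section~\ref{sec:dynamics}. The principal obstacle, to repeat, is the unimodality-in-$m$ step behind $\{\yy:n(\yy)=n\}=\Delta^+_n$; everything else is null-set bookkeeping and Tonelli's theorem.
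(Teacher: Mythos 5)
Your proof is correct and follows essentially the same route as the paper: the identity \eqref{eq:full-answer} is established by the decomposition and the discussion preceding the theorem, and the paper's formal proof addresses only the effectiveness/approximation claim. The one step you spell out more carefully---the unimodality of $m\mapsto\ell_m(x_2)$ used to show $\{\yy:n(\yy)=n\}=\Delta^+_n$ up to a null set---is left implicit in the paper's text but is indeed the crux of the identification, and your argument for it (via concavity and the enumeration by increasing slope, with breakpoints decreasing in $m$) is sound.
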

\begin{proof}
Using the calculations of \cite[Section 3]{Kendall-2011b}, we can deduce that \(\Expect{F}<\infty\) and therefore that the infinite sums of non-negative terms on the right-hand side are convergent.
By the above arguments, given the subsequent stochastic analysis of \(\Gamma_\pm\) then we may approximate \(\int_{0}^1 \Gamma_-(-s)\d s\)
to within an additive absolute error of \(\tfrac{1}{\Gamma_+(1)}\sqrt{\eps/2}\) and \(\int_{0}^1 \Gamma_+(s)\d s\) to within an additive absolute error of \(\tfrac{1}{\Gamma_-(-1)}\sqrt{\eps/2}\).
Since \(\int_{0}^1 \Gamma_-(-s)\d s<\Gamma_-(-1)\) and \(\int_{0}^1 \Gamma_+(s)\d s<\Gamma_+(1)\), it follows that the 
product of integrals on the right-hand side of Equation \eqref{eq:full-answer} can be approximated to within an additive absolute error of \(\eps/2\).
Moreover 
we can choose to approximate
each term \(\Leb_2(C^\pm_n)\) in the two infinite sums 
to within an additive absolute error of 
\(\tfrac{1}{\Leb_2(\Delta^\pm_n)}2^{-n-2}\eps\). 
Accordingly the entire expression can be approximated to within an additive absolute error of \(\eps\).
While this approximation uses all of \(\Gamma_\pm\), we may truncate the absolutely convergent sums as required to produce an approximation of any required accuracy using only finitely many of these lines.
\end{proof}

In the remainder of this paper we improve on this result by showing that we can provide an explicit \(L^1\)-approximation, by bounding the mean tails of the infinite sums in \eqref{eq:full-answer}.
To prepare for this, consider the region \(C_n^+\).
We can produce a simple triangular approximation region as follows: 
for each \(n\geq0\) 
let \(\widetilde{\Delta}^+_n\) be the triangle formed by \(\ell_n\), the \(x\)-axis, and the \(x=0\) axis; note that this region contains \(C^+_n\). 
Again this region is illustrated in Figure \ref{fig:seminal}.
We can then replace the regions involved in the tails of the infinite sums in \eqref{eq:full-answer}. For example: 
\begin{equation}\label{eq:correction-region}
 \bigcup_{n=N}^\infty \left(C^+_n\times\Delta^+_n\right)\quad\subseteq\quad
 \bigcup_{n=N}^\infty \left(\widetilde{\Delta}^+_n\times\Delta^+_n\right)\,.
\end{equation}
Note that the approximating sets \(\widetilde{\Delta}^+_n\) are now formed entirely from lines in \(\Pi_{\infty,+}\). A similar argument 
applies for the sum involving \(C_n^-\) rather than \(C_n^+\), resulting in an approximating tail using regions formed entirely from lines in \(\Pi_{\infty,-}\),
and therefore the two corrections are independent. If we can obtain \emph{a priori} bounds for the two correction regions, then we have an effective
truncated approximation for \eqref{eq:full-answer}, namely
\begin{equation}\label{eq:full-approx}
 \left(\int_{0}^1 \Gamma_-(-s)\d s\right)\times\left(\int_{0}^1 \Gamma_+(s)\d s\right)\\
+ \sum_{n=0}^N \Leb_2(C^+_n) \Leb_2(\Delta^+_n)
+ \sum_{n=0}^N \Leb_2(C^-_n) \Leb_2(\Delta^-_n)\,.
\end{equation}
This truncated approximation can then itself be approximated in finitary terms, in the sense of involving use of only a finite number of lines of \(\Pi_\infty\)
obtained from  \((\Gamma_-(s):-1\leq s\leq 1/m_-)\) and \((\Gamma_+(s):1/m_+\leq s \leq 1)\)
for suitable \(m_\pm\).

To complete our analysis of the \(4\)-volume specified in \eqref{eq:correction-region} we now need to determine the dynamics of the random processes \((\Gamma_\pm(s):s\in(0,1])\),
both to show that the computations involved in the representation given by Theorem \ref{thm:full-answer} can be achieved effectively, 
and to obtain explicit control of the mean behaviour of the tails of the infinite sums using the upper bounds
\begin{equation}\label{eq:correction1}
  \sum_{n=N}^\infty \left(\Leb_2(\widetilde{\Delta}^+_n)\times\Leb_2(\Delta^+_n)\right)\,.
\end{equation}

\section{Seminal curve dynamics}\label{sec:dynamics}
To prepare for calculation of seminal curve dynamics, we first compute expressions for the intensity measure \(\nu\) in two different coordinate frames.
Consider first the coordinates arising from intercepts \(y_0\), \(y_s\) on \(x=0\), \(x=s\) for some fixed \(s>0\).
This is a linear transformation of coordinates, resulting in
\begin{equation}\label{eq;vertical}
 \d\nu\quad=\quad\frac{\d y_0\d y_s}{2 s}\,.
\end{equation}
Equation \eqref{eq;vertical} makes it evident that \(\nu\) and thus \(\Pi_\infty\) satisfy the (statistical) symmetry \(y\to c y\), \(x\to c^2x\) for non-zero \(c\).
Secondly, consider new coordinates given by slope \(\sigma\) and intersection \(x\) with a fixed reference line of slope \(\sigma_0\),
and intercepts \(b_0\), \(b_s\) on \(x=0\), \(x=s\) for some fixed \(s>0\). 
In \(y_0\), \(y_s\) coordinates we find
\begin{align*}
 (s-x) b_0 + x b_s \quad&=\quad (s-x) y_0 + x y_s \,,\\
 y_0 + \sigma x    \quad&=\quad b_0 + \sigma_0 x\,.
\end{align*}
Now examine \(\nu_{\infty,+}\) obtained as the intensity measure of \(\Pi_{\infty,+}\).
We get different answers for the regions in which \(\sigma\) is less than 
or greater than \(\sigma_0\): recalling that all lines in \(\Pi_{\infty,+}\)
are of positive slope and intersect the negative part of the \(x\)-axis, calculations yield
\begin{align}\label{eq:slant-}
 \d\nu_{\infty,+}\quad&=\quad \half (\sigma_0-\sigma)\d\sigma\d x\qquad\text{ valid for }0<\sigma<\sigma_0\,,\\
\label{eq:slant+}
 \d\nu_{\infty,+}\quad&=\quad \half (\sigma-\sigma_0)\d\sigma\d x\qquad\text{ valid for }\sigma_0<\sigma<\sigma_0+b_0/x\,.
\end{align}

For convenience, we concentrate attention on \(\Gamma=\Gamma_+\). 
We now calculate the one-point joint distribution of \((\Gamma(s), \Gamma'(s))\) for \(0<s\leq1\), bearing in mind that the two-sided
derivative \(\Gamma'(s)\) exists only almost surely for each fixed non-zero \(s\):
\begin{lem}\label{lem:1point1}
 For \(s\in(0,1]\),
\begin{align}\label{eq:1point1}
 \Prob{\Gamma(s)>\gamma}\quad&=\quad \exp\left(-\frac{\gamma^2}{4s}\right) \qquad \text{ for } \gamma>0\,,\\
 \mathcal{L}\left(\Gamma'(s)\right) \quad&=\quad \text{Uniform}\left[0, \frac{\Gamma(s)}{s}\right]\,.
\end{align}
In particular, \(\Gamma(s)\) has a Rayleigh\((\sqrt{2s})\) distribution. 
\end{lem}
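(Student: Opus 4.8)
The plan is to handle the two assertions in turn: the survival function of $\Gamma(s)$ by a void-probability computation, and the law of $\Gamma'(s)$ by Palm calculus based on the Slivynak--Mecke theorem already invoked in Section~\ref{sec:city}.

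First I would note that, by the defining formula \eqref{eq:gamma+}, the event $\{\Gamma(s)>\gamma\}$ coincides with the event that no line of $\Pi_{\infty,+}$ has intercept of height at most $\gamma$ on the axis $x=s$. (Every line of $\Pi_{\infty,+}$, having positive slope and meeting the negative $x$-axis, lies strictly above the $x$-axis at $x=s>0$, so its height there is automatically positive; and since $\nu$ is non-atomic, $\Prob{\Gamma(s)=\gamma}=0$.) As $\Pi_{\infty,+}$ is Poisson, this is the void probability $\exp(-\nu_{\infty,+}(A_{s,\gamma}))$, where $A_{s,\gamma}$ denotes the set of lines whose height on $x=s$ lies in $(0,\gamma]$. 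In the intercept coordinates $(y_0,y_s)$ on $x=0$ and $x=s$, for which \eqref{eq;vertical} gives $\d\nu=\d y_0\,\d y_s/(2s)$, the two defining constraints of $\Pi_{\infty,+}$ (positive slope, negative $x$-intercept) become exactly $0<y_0<y_s$, so that
\[
 \nu_{\infty,+}(A_{s,\gamma})\quad=\quad\frac{1}{2s}\int_0^\gamma\!\int_0^{y_s}\d y_0\,\d y_s\quad=\quad\frac{\gamma^2}{4s}\,;
\]
this gives $\Prob{\Gamma(s)>\gamma}=\exp(-\gamma^2/(4s))$, which is precisely the survival function of the Rayleigh$(\sqrt{2s})$ law.

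For the derivative I would first check that, for fixed $s\in(0,1]$, almost surely the infimum in \eqref{eq:gamma+} at $x=s$ is attained, and attained by a unique line $\ell^\star\in\Pi_{\infty,+}$ (finitely many lines lie below any given height at $x=s$, by the computation just made, and a.s.\ no two of them agree there), and that then $\Gamma'(s)$ equals the slope of $\ell^\star$: writing $h_\ell$ for the affine function whose graph is $\ell$, we have $\Gamma\leq h_{\ell^\star}$ with equality at $s$, so $s$ is an interior minimum of $h_{\ell^\star}-\Gamma$, and since the concave process $\Gamma$ is differentiable at the fixed point $s$ with probability one, this forces $\Gamma'(s)=h_{\ell^\star}'(s)$. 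It then remains to obtain the joint law of the height and slope of $\ell^\star$. Applying the Slivynak--Mecke theorem to the sum over $\ell\in\Pi_{\infty,+}$ of $f(\ell)\,\mathbb{I}[h_\ell(s)<h_m(s)\text{ for every }m\in\Pi_{\infty,+}\setminus\{\ell\}]$ turns its expectation into $\int f(\ell)\,e^{-h_\ell(s)^2/(4s)}\,\nu_{\infty,+}(\d\ell)$, the exponential being the void probability of the first part applied to the freshly added atom $\ell$. Taking $f$ to depend only on the height $h=h_\ell(s)$ and the slope $\sigma$ of $\ell$, and changing coordinates from $(y_0,y_s)$ to $(h,\sigma)=(y_s,(y_s-y_0)/s)$ --- a linear map of constant Jacobian $s$ --- converts \eqref{eq;vertical} into $\d\nu_{\infty,+}=\half\,\d h\,\d\sigma$ on the region $\{0<\sigma<h/s\}$; hence $(\Gamma(s),\Gamma'(s))$ has joint density $\half\,e^{-h^2/(4s)}\,\mathbb{I}[0<\sigma<h/s]$. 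Integrating out $\sigma$ recovers the Rayleigh density $(h/2s)\,e^{-h^2/(4s)}$ of the first part, and the residual conditional density of $\sigma$ given $h$ is the constant $s/h$ on $(0,h/s)$, i.e.\ uniform on $[0,h/s]$, which is the second claim.

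The two coordinate changes are routine bookkeeping. The step that needs genuine care is the Slivynak--Mecke set-up --- arranging the ``strictly lower than every other line'' indicator so that exactly one term of the sum survives, and checking that conditioning the integrand on the added atom contributes only the void-probability factor --- together with the almost-sure identification of $\Gamma'(s)$ with the slope of $\ell^\star$, which is the one place where concavity of $\Gamma$ is genuinely used.
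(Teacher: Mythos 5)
Your proof is correct and follows essentially the same route as the paper: both rest on the intensity \(\d\nu = \d y_0\,\d y_s/(2s)\), the observation that the constraints defining \(\Pi_{\infty,+}\) become \(0<y_0<y_s\), and the identification of the point process of \(x=s\)-intercepts, marked by slopes, as an inhomogeneous Poisson process with intensity \(\tfrac{1}{2s}t\,\d t\) and mark law Uniform\([0,t/s]\). The paper states this marked-Poisson structure and says ``the result follows immediately''; your Slivynak--Mecke calculation (together with the void-probability computation and the careful identification of \(\Gamma'(s)\) with the slope of the unique minimizing line) is simply a more explicit unpacking of that final step.
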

\begin{proof}
Consider the intensity measure \(\nu\) in \(y_0\), \(y_s\) coordinates, as specified in  \eqref{eq;vertical}.
It follows that the point process of intersections of the \(x=s\) axis with lines from \(\Pi_{\infty,+}\), 
with each intersection marked by the slope of the corresponding line, 
is given by an inhomogeneous Poisson process of points \(0<t_1<t_2<\ldots\), with intensity measure \(\tfrac{1}{2s}t\d t\), 
such that each point \(t_m\) is independently marked by a slope with distribution Uniform\([0,t_m/s]\). 
The result follows immediately.
\end{proof}

These arguments can be extended to determine the two-point joint distribution of
the pair of pairs \((\Gamma(s), \Gamma'(s))\) and \((\Gamma(t), \Gamma'(t))\).
However for the purposes of Theorem \ref{thm:full-answer} we need to understand the dynamical behaviour of the random process \((\Gamma(s):s\in(0,1])\). 
It turns out to be most convenient to study this process in reversed time, so we take \(\Gamma'(s)\) to be continuous from the left and to have 
right limits (``c\`agl\`ad'', to use a French probabilistic acronym).

\begin{thm}\label{thm:dynamics}
 Consider the times of changes of slope of \((\Gamma(s):0<s\leq1)\) in reversed time:
\[
 1 \quad=\quad  S_0 \quad>\quad S_1 \quad>\quad S_2 \quad>\quad \ldots\quad>\quad 0\,.
\]
Using the enumeration of tangent lines \(\ell_0\), \(\ell_1\), \(\ell_2\), \ldots, from Section \ref{sec:seminal}, we find that 
\(\ell_n\) has the slope of \(\Gamma'(s)\) for \(S_n\geq s>S_{n+1}\).
Writing \(Y_n=\Gamma(S_n) - S_n \Gamma^\prime(S_n)\) for the intercept of \(\ell_n\) on the \(y\) axis,
\begin{align}
 \frac{1}{S_{n+1}} \quad&=\quad \frac{1}{S_n} + \frac{4}{Y_n^2} E_{n+1}\,,\label{eq:S}\\
\Gamma^\prime(S_{n+1}) \quad&=\quad \Gamma^\prime(S_n) + \frac{Y_n}{S_{n+1}}\sqrt{U_{n+1}}\,.\label{eq:Gamma'}
\end{align}
where the \(E_n\) have standard Exponential distributions, the \(U_n\) have Uniform\([0,1]\) distributions, and all are independent of each other and
of \(\Gamma(S_0)=\Gamma(1)\) and \(\Gamma^\prime(S_0)=\Gamma'(1)\) (whose joint distribution follows from \eqref{eq:1point1}).
\end{thm}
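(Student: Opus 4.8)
The plan is to realise $(\Gamma(s):0<s\le1)$ as a functional of the Poisson line process $\Pi_{\infty,+}$ and to read off the reversed-time transition law from the slant-coordinate intensity \eqref{eq:slant+}, using a restriction (Markov) property of Poisson processes. Write $\sigma_n=\Gamma'(S_n)$ for the slope of the tangent line $\ell_n$, so that $Y_n=\Gamma(S_n)-S_n\sigma_n$ is its $y$-intercept. First I would fix $n$ and take $\ell_n$ itself as the reference line in the slant coordinates of Section \ref{sec:dynamics}: then $\sigma_0=\sigma_n$, $b_0=Y_n$, and every line of $\Pi_{\infty,+}$ not parallel to $\ell_n$ is coded by $(\sigma,x)$, its slope together with the abscissa at which it meets $\ell_n$. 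A direct check shows that a line through $(x,\ell_n(x))$ of slope $\sigma>\sigma_n$ meets the negative $x$-axis precisely when $\sigma<\sigma_n+Y_n/x$, so \eqref{eq:slant+} asserts that the part of $\Pi_{\infty,+}$ of slope exceeding $\sigma_n$ which meets $\ell_n$ at $x\in(0,S_n)$ is a Poisson process of intensity $\half(\sigma-\sigma_n)\d\sigma\d x$ on $R=\{0<x<S_n,\ \sigma_n<\sigma<\sigma_n+Y_n/x\}$.

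The key step is a Markov property relative to $\mathcal F_n:=\sigma\bigl((\ell_0,S_0),\dots,(\ell_n,S_n)\bigr)$: conditionally on $\mathcal F_n$, the lines of $\Pi_{\infty,+}$ lying in the region $R$ form precisely the Poisson process just described, and they alone determine $(\ell_{n+1},S_{n+1})$. For the first (irrelevance) part: a line of $\Pi_{\infty,+}$ can coincide with the envelope immediately to the left of $S_n$ only if it lies below $\ell_n$ there, which forces its slope to exceed $\sigma_n$ and its intersection with $\ell_n$ to have abscissa in $(0,S_n)$; any line of slope $\le\sigma_n$ that dips below $\ell_n$ near $S_n$, or any line of slope $>\sigma_n$ meeting $\ell_n$ at an abscissa $\ge S_n$, would already intercept the vertical $x=S_n$ strictly below $\ell_n$, hence below the infimum $\Gamma(S_n)$, which is impossible. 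Among the remaining (relevant) lines, the one whose intersection with $\ell_n$ has the largest abscissa is, almost surely, the next envelope piece $\ell_{n+1}$, at abscissa $S_{n+1}$; the event that some other line overtakes it first at the identical abscissa is null. For the conditional law: each line in $R$ lies strictly above $\ell_n$, hence strictly above $\Gamma$, throughout $[S_n,1]$, and meets $x=S_n$ above $\Gamma(S_n)$, so its presence is consistent with every configuration generating $\mathcal F_n$; since $\mathcal F_n$ constrains $\Pi_{\infty,+}$ over $(0,S_n)$ only through such lines, the independence of a Poisson process over disjoint parts of its phase space yields the claim.

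It then remains to compute. Integrating out $\sigma$ over $(\sigma_n,\sigma_n+Y_n/x)$ collapses the conditional intensity to $\tfrac{Y_n^2}{4x^2}\d x$ on $(0,S_n)$; the substitution $\xi=1/x$ turns this into the constant intensity $\quarter Y_n^2$ on $(1/S_n,\infty)$, so $1/S_{n+1}-1/S_n$ is exponential of rate $\quarter Y_n^2$, which is \eqref{eq:S} with $E_{n+1}:=\quarter Y_n^2\,(1/S_{n+1}-1/S_n)$. Conditionally on $S_{n+1}$, the slope-mark $\sigma_{n+1}=\Gamma'(S_{n+1})$ of the extremal point has density proportional to $(\sigma-\sigma_n)$ on $(\sigma_n,\sigma_n+Y_n/S_{n+1})$, with distribution function $\bigl((\sigma-\sigma_n)S_{n+1}/Y_n\bigr)^2$ there, so $U_{n+1}:=\bigl((\sigma_{n+1}-\sigma_n)S_{n+1}/Y_n\bigr)^2$ is uniform on $[0,1]$, which is \eqref{eq:Gamma'}. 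The mark of the extremal point of a marked Poisson process is independent of its location, and since the laws of $E_{n+1}$ and $U_{n+1}$ do not depend on $\mathcal F_n$ these variables are independent of $\mathcal F_n$; iterating over $n$ gives the asserted mutual independence, while the law of $(\Gamma(S_0),\Gamma'(S_0))=(\Gamma(1),\Gamma'(1))$ is furnished by Lemma \ref{lem:1point1}.

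The hard part will be the Markov step: making rigorous the partition of the phase space of $\Pi_{\infty,+}$ into the portion frozen by $\mathcal F_n$ (which includes $\ell_n$ itself, and the requirement that no line dip below $\Gamma|_{[S_n,1]}$) and the ``free'' portion $R$ carrying the future dynamics, and checking that $\mathcal F_n$ imposes no constraint on the free portion and that its slant-coordinate description is exactly \eqref{eq:slant+} with the stated range of validity. Once that bookkeeping is in place, everything downstream is a routine change of variables.
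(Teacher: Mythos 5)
Your proposal is correct and follows essentially the same route as the paper: take $\ell_n$ as the reference line in the slant coordinates of \eqref{eq:slant+}, integrate out the slope mark to get intensity $\quarter (Y_n/x)^2\d x$ for the intercept process, change variables $\xi=1/x$ to obtain the exponential spacing \eqref{eq:S}, and read off \eqref{eq:Gamma'} by the probability integral transform applied to the slope-mark density. The paper states all this very tersely (``the result follows by calculation'') and leaves the Poisson restriction/Markov step implicit; your explicit treatment of that step is sound and is an elaboration of, rather than a departure from, the paper's argument.
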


This yields a dynamical algorithm to simulate \(\Gamma(s)\) segment-by-segment as \(s\) decreases down to \(0\).
This is what is required in order to generate a simulation recipe for the approximation \eqref{eq:full-approx}.

\begin{proof}
Again the proof follows from re-expressing the intensity measure \(\nu\) in new coordinates, this time as given by \eqref{eq:slant+}.
This calculation can be applied to the point process of intersections
of \(\Pi_{\infty,+}\) with a fixed reference line of slope \(\sigma_0\),
and intercepts \(b_0\), \(b_s\) on \(x=0\), \(x=s\) for some fixed \(s>0\). 
Restrict attention to the case when the intercepting line has slope greater than \(\sigma_0\). 
Considering the point process of intercepts with each intersection marked by the slope of the corresponding line, 
the subprocess of intercepts \(0<x_1<x_2<\ldots\) with slope greater than \(\sigma_0\) has intensity measure \(\quarter (b_0/x)^2 \d x\), and each point \(x_m\) is independently marked 
by a slope with density \(2(\sigma-\sigma_0)/(b_0/x)^2\), for \(0<\sigma_0<\sigma<\sigma_0+b_0/x\).
The result follows by calculation.
\end{proof}

These dynamics are ``reverse-time dynamics''. The calculations of \eqref{eq:slant-} could be applied to determine ``forward-time dynamics'': however these are not 
useful for our current purposes.

We now state and prove three corollaries about the behaviour of the system \(((S_n,Y_n):n\geq0)\).
The recursive system \eqref{eq:S} and \eqref{eq:Gamma'} leads to a delightfully simple expression for the intercept process \((Y_{n}:n\geq0)\)
as a perpetuity \cite{Vervaat-1979}:
\begin{cor}\label{cor:perpetuity}
In the notation of Theorem \ref{thm:dynamics},
\begin{equation}\label{eq:perpetuity}
 Y_{n+1}\quad=\quad Y_n (1-\sqrt{U_{n+1}}) \quad=\quad Y_0 \prod_{m=1}^{n+1} (1-\sqrt{U_m})\,.
\end{equation}
In particular, \(\operatorname{limsup}_{n\to\infty} 3^n Y_n\) is a finite random variable, and so almost surely \(Y_n\) converges to zero geometrically fast.
\end{cor}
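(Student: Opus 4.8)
The plan is to dispatch the two assertions in turn: the deterministic one-step recursion (hence the product formula) for \((Y_n)\), and then the almost-sure finiteness of \(\limsup_{n\to\infty}3^nY_n\).

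For the recursion I would argue purely geometrically. The lines \(\ell_n\) and \(\ell_{n+1}\) are consecutive tangent segments of the concave curve \(\Gamma=\Gamma_+\), and by the very definition of \(S_{n+1}\) as a slope-change time of \((\Gamma(s):0<s\leq1)\) in reversed time they meet at \(x=S_{n+1}\), where \(\Gamma(S_{n+1})\) is their common height. Writing \(\ell_n\colon y=\Gamma'(S_n)\,x+Y_n\) and \(\ell_{n+1}\colon y=\Gamma'(S_{n+1})\,x+Y_{n+1}\) and equating heights at \(x=S_{n+1}\),
\[
 S_{n+1}\,\Gamma'(S_n)+Y_n\quad=\quad S_{n+1}\,\Gamma'(S_{n+1})+Y_{n+1}\,,
\]
and substituting the slope increment \(\Gamma'(S_{n+1})-\Gamma'(S_n)=\tfrac{Y_n}{S_{n+1}}\sqrt{U_{n+1}}\) from \eqref{eq:Gamma'} makes the factor \(S_{n+1}\) cancel, leaving \(Y_{n+1}=Y_n(1-\sqrt{U_{n+1}})\). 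Since \(1-\sqrt{U_{n+1}}\in[0,1]\), the sequence \((Y_n)\) is non-negative and non-increasing, and iterating the one-step relation down to \(Y_0=\Gamma(1)-\Gamma'(1)\) gives the product \eqref{eq:perpetuity}. By Lemma \ref{lem:1point1} (and since \(\Gamma'(1)\) is conditionally uniform on \([0,\Gamma(1)]\), with \(\Gamma(1)<\infty\) Rayleigh) we have \(Y_0\in(0,\infty)\) almost surely; note that the \(E_m\) play no role in the \((Y_n)\)-dynamics.

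For the tail statement the key point is the elementary moment \(\Expect{\sqrt{U_m}}=\int_0^1\sqrt{u}\d u=\tfrac23\), so that \(\Expect{1-\sqrt{U_m}}=\tfrac13\) — which is precisely why the constant \(3\) appears. Put \(\fF_n=\sigma(\Gamma(1),\Gamma'(1),U_1,\dots,U_n)\); by Theorem \ref{thm:dynamics}, \(U_{n+1}\) is independent of \(\fF_n\) while \(Y_n\) is \(\fF_n\)-measurable, so
\[
 \Expect{3^{n+1}Y_{n+1}\mid\fF_n}\quad=\quad 3^{n+1}Y_n\,\Expect{1-\sqrt{U_{n+1}}}\quad=\quad 3^{n}Y_n\,.
\]
Thus \((3^nY_n)_{n\geq0}\) is a non-negative martingale (integrable since \(0\leq Y_n\leq Y_0\) with \(\Expect{Y_0}<\infty\)), hence converges almost surely to a finite limit by the martingale convergence theorem; in particular \(\limsup_{n\to\infty}3^nY_n\) is a finite random variable. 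Finiteness of this \(\limsup\) yields a random \(C\) with \(Y_n\leq C\,3^{-n}\) for all \(n\), which is the asserted geometrically fast convergence to \(0\).

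I do not expect a genuine obstacle here: the recursion is a one-line consequence of the geometry of consecutive tangents together with \eqref{eq:Gamma'}, and the tail bound is a standard non-negative-martingale argument whose only input is \(\Expect{\sqrt{U}}=\tfrac23\). The only point needing a little care is bookkeeping which randomness the increments depend on, so that the conditional expectation factorizes. If one wanted the sharper statement \(3^nY_n\to0\) rather than merely finiteness of the \(\limsup\), one could instead compute \(\Expect{\log(1-\sqrt U)}=-\tfrac32<-\log3\) and apply the strong law of large numbers to \(\log Y_n=\log Y_0+\sum_{m=1}^n\log(1-\sqrt{U_m})\); but the martingale route already suffices for the statement as given.
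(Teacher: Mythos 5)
Your proposal is correct and takes essentially the same route as the paper: deduce the one-step recursion $Y_{n+1}=Y_n(1-\sqrt{U_{n+1}})$ from the definition of $Y_n$ together with \eqref{eq:Gamma'} (your geometric reading of the intersection of $\ell_n$ and $\ell_{n+1}$ at $x=S_{n+1}$ is equivalent to substituting directly into $Y_n=\Gamma(S_n)-S_n\Gamma'(S_n)$), then use $\Expect{1-\sqrt{U_m}}=\tfrac13$ to see that $(3^nY_n)$ is a non-negative martingale and invoke martingale convergence. The paper's proof is just a terse version of exactly this argument; your extra remarks about $Y_0\in(0,\infty)$ and the SLLN/$\Expect{\log(1-\sqrt{U})}=-\tfrac32$ route are harmless additions.
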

\begin{proof}
 The perpetuity equation \eqref{eq:perpetuity} can be deduced directly from the expression for \(Y_n\) and \eqref{eq:Gamma'}.
It follows from \(\Expect{1-\sqrt{U_n}}=\tfrac{1}{3}\) that \((3^nY_n:n\geq0)\) is a non-negative martingale, and therefore converges to a non-negative random limit.
\end{proof}

From \eqref{eq:S} we can deduce that
\[
 \frac{Y_{n+1}^2}{S_{n+1}}\quad=\quad
 \frac{Y_{n+1}^2}{S_{n}} + 4\frac{Y_{n+1}^2}{Y_n^2} E_{n+1}
\quad=\quad \frac{Y_{n+1}^2}{Y_n^2}\left(\frac{Y_{n}^2}{S_{n}} + E_{n+1}\right) 
\quad=\quad (1-\sqrt{U_{n+1}})^2\left(\frac{Y_{n}^2}{S_{n}} + E_{n+1}\right)\,.
\]
Consequently we can take conditional expectations and use independence and a Foster-Lyapunov argument 
(\cite[Ch.~15 especially Theorem 15.0.1]{MeynTweedie-1993} or \cite[Theorem 3.1]{RobertsTweedie-1996b})
to reveal the following:
\begin{cor}\label{cor:GE}
 \((Y_n^2/S_n:n\geq0)\) forms a geometrically ergodic Markov chain.
\end{cor}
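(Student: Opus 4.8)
The plan is to identify $(Y_n^2/S_n:n\geq0)$ as a Markov chain driven by the independent data of Theorem \ref{thm:dynamics}, and then to verify a Foster--Lyapunov geometric drift condition together with the standard minorisation hypotheses, invoking \cite[Theorem 15.0.1]{MeynTweedie-1993} (equivalently \cite[Theorem 3.1]{RobertsTweedie-1996b}). Set $Z_n=Y_n^2/S_n$. The display immediately preceding the statement exhibits $(Z_n:n\geq0)$ as a time-homogeneous Markov chain on $(0,\infty)$:
\[
 Z_{n+1}\quad=\quad W_{n+1}\bigl(Z_n+E_{n+1}\bigr)\,,\qquad W_{n+1}\;:=\;(1-\sqrt{U_{n+1}})^2\,,
\]
with $(U_{n+1},E_{n+1})$ independent of $Z_n$ and of the past.

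For the drift I would reuse the elementary moments already appearing in the proof of Corollary \ref{cor:perpetuity}: from $\Expect{\sqrt{U}}=\tfrac23$ and $\Expect{U}=\tfrac12$ one gets $\Expect{W_{n+1}}=\tfrac16$, while $\Expect{E_{n+1}}=1$. Taking $V(z)=1+z$ and writing $P$ for the transition kernel, independence gives
\[
 PV(z)\quad=\quad 1+\tfrac16(z+1)\quad=\quad \tfrac16 V(z)+1\,.
\]
Hence, with $\lambda=\tfrac14$, $b=1$ and the sublevel set $C=\{z:V(z)\leq 12\}=(0,11]$, one has $PV(z)\leq\lambda V(z)$ for $z\notin C$ (because then $1\leq V(z)/12$) and $PV(z)\leq\lambda V(z)+b$ for every $z$, so that $PV\leq\lambda V+b\,\mathbb{I}_C$ with $\lambda<1$ and $V\geq1$ finite: this is the geometric drift condition (V4) of \cite{MeynTweedie-1993}.

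It then remains to check $\psi$-irreducibility, aperiodicity, and that $C$ is petite. For each $z$ the conditional law of $Z_{n+1}$ is that of the product $W(z+E)$; a routine change of variables shows it has density with respect to Lebesgue measure on $(0,\infty)$,
\[
 p(z,y)\quad=\quad \int_0^1 f_W(w)\,\tfrac1w\,e^{-(y/w-z)}\,\mathbb{I}_{[y/w>z]}\d w\,,
\]
where $f_W>0$ on $(0,1)$ is the density of $W$. Thus $p(z,\cdot)>0$ on all of $(0,\infty)$, so $P(z,A)>0$ whenever $\Leb(A)>0$; hence the chain is Lebesgue-irreducible, $\psi$-irreducible with maximal irreducibility measure equivalent to Lebesgue measure, and aperiodic. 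Moreover $p$ is jointly continuous on $(0,\infty)^2$, and for $y>z$ it equals $e^{z}$ times the ($z$-free, strictly positive, finite) density of $WE$ at $y$; treating $z<a$ and $z\in[a,M]$ separately then gives $\inf_{z\in(0,M]}\inf_{y\in[a,b]}p(z,y)>0$ for any $0<a<b$ and $M>0$, and hence a one-step minorisation $P(z,\cdot)\geq\delta\,\phi$ valid for all $z\in C$, with $\phi$ uniform on $[a,b]$ and $\delta>0$. Thus $C$ is small. Assembling all of this, \cite[Theorem 15.0.1]{MeynTweedie-1993} yields a stationary law $\pi$, a rate $\rho<1$ and a finite function $z\mapsto M(z)$ with $\|P^n(z,\cdot)-\pi\|_{\mathrm{TV}}\leq M(z)\rho^n$ for all $n$: that is, $(Z_n)$ is geometrically ergodic, which is the assertion of the corollary.

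I expect the only step needing genuine care to be the minorisation underlying the smallness of $C$ --- in particular the behaviour of $p(z,\cdot)$ as $z$ approaches the boundary point $0$ of the state space $(0,\infty)$; the drift inequality and the irreducibility/aperiodicity statements follow routinely from the explicit recursion and the everywhere-positive transition density. (One could alternatively appeal to the iterated-random-function theory for random affine recursions, but the Foster--Lyapunov approach is the one signposted by the citations attached to the statement.)
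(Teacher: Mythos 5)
Your argument is correct and takes essentially the route the paper signposts: the paper merely points at the recursion $Z_{n+1}=(1-\sqrt{U_{n+1}})^2(Z_n+E_{n+1})$ and cites Meyn--Tweedie/Roberts--Tweedie without supplying any detail, while you actually verify the geometric drift (V4) with $V(z)=1+z$, derive the transition density, and establish $\psi$-irreducibility, aperiodicity and smallness of sublevel sets, including the correct handling of the boundary $z\to0^+$ via the factorisation $p(z,y)=e^{z}g_{WE}(y)$ for $y>z$. One small wrinkle worth flagging: the paper's own pre-corollary display misses a factor of $4$ (from \eqref{eq:S} the true recursion is $Z_{n+1}=(1-\sqrt{U_{n+1}})^2(Z_n+4E_{n+1})$), and you have inherited that typo; this alters only the numerical constants in your drift inequality (e.g.\ $PV(z)=\tfrac53+\tfrac16 z$ rather than $1+\tfrac16 V(z)$) and in no way affects the validity of the conclusion.
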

One further step is useful in understanding the error bound.
\begin{cor}\label{cor:UP}
The mean value \(\Expect{Y_n^3/S_n}\) converges to zero geometrically fast. Indeed:
\begin{equation}\label{eq:decay}
\Expect{\frac{Y_n^3}{S_n}}\quad\leq\quad  \text{constant }\times 3^{-n}\,.
\end{equation}
In particular, \(Y_n^3/S_n\) almost surely converges to zero geometrically fast.
\end{cor}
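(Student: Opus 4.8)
The plan is to bound $\Expect{Y_n^3/S_n}$ by exploiting the explicit recursions from Theorem \ref{thm:dynamics} together with the perpetuity representation from Corollary \ref{cor:perpetuity}, rather than invoking the Foster--Lyapunov machinery of Corollary \ref{cor:GE} directly. First I would write $Y_n^3/S_n = (Y_n^2/S_n)\cdot Y_n$ and try to decouple the two factors: $Y_n = Y_0\prod_{m=1}^n(1-\sqrt{U_m})$ depends only on $(U_m)_{m\le n}$, whereas $Y_n^2/S_n$ involves also the exponentials $(E_m)_{m\le n}$. The recursion $Y_{n+1}^2/S_{n+1} = (1-\sqrt{U_{n+1}})^2\left(Y_n^2/S_n + E_{n+1}\right)$ displayed before Corollary \ref{cor:GE} is the key structural fact; unrolling it gives $Y_n^2/S_n$ as a finite sum of products of the factors $(1-\sqrt{U_m})^2$ times the $E_m$ (plus the boundary term involving $Y_0^2/S_0 = Y_0^2$).

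Concretely, I would prove by induction (or direct unrolling) that
\begin{equation*}
 \frac{Y_n^2}{S_n}\quad=\quad \Bigl(\prod_{m=1}^n(1-\sqrt{U_m})^2\Bigr)\frac{Y_0^2}{S_0}
 \;+\;\sum_{k=1}^n\Bigl(\prod_{m=k}^n(1-\sqrt{U_m})^2\Bigr)E_k\,.
\end{equation*}
Multiplying through by $Y_n = Y_0\prod_{m=1}^n(1-\sqrt{U_m})$ and taking expectations, I would use independence of the $U_m$, the $E_m$, and $(Y_0,S_0)$, together with the elementary moments $\Expect{(1-\sqrt{U})} = \tfrac13$, $\Expect{(1-\sqrt{U})^2} = \tfrac16$, and $\Expect{(1-\sqrt{U})^3} = \tfrac1{10}$, and $\Expect{E_k}=1$. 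Each summand then contributes a product in which the indices $m < k$ carry a factor $\Expect{(1-\sqrt{U})} = \tfrac13$ and the indices $m \ge k$ carry a factor $\Expect{(1-\sqrt{U})^3} = \tfrac1{10}$, so the $k$-th term is bounded by $\Expect{E_k}\,3^{-(k-1)}10^{-(n-k+1)}\,\Expect{Y_0}$; summing a geometric-type series in $k$ shows the whole sum is $O(3^{-n})$, and the boundary term contributes $\Expect{Y_0^3/S_0}\cdot 6^{-n}$ (finite by Lemma \ref{lem:1point1}, since $\Gamma(1)$ is Rayleigh and $S_0=1$), which is even smaller. This yields \eqref{eq:decay}. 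The almost-sure geometric convergence then follows by a Borel--Cantelli / Markov-inequality argument: $\Prob{Y_n^3/S_n > 2.9^{-n}} \le (2.9/3)^n\cdot\text{const}$ is summable.

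The main obstacle I anticipate is the bookkeeping in the decoupling step: one must be careful that after multiplying $Y_n^2/S_n$ by the extra single power $Y_n = Y_0\prod(1-\sqrt{U_m})$, the exponents of $(1-\sqrt{U_m})$ in each summand become $3$ for $m\ge k$ and $1$ for $m<k$ (and the boundary term gets exponent $3$ throughout), so that the relevant per-factor expectations are $\tfrac1{10}$ and $\tfrac13$ rather than $\tfrac16$; getting these constants right is what pins down the rate $3^{-n}$ as opposed to some other geometric rate. A secondary point worth checking is the finiteness of $\Expect{Y_0} = \Expect{\Gamma(1) - \Gamma'(1)}$ and $\Expect{Y_0^3}$: from Lemma \ref{lem:1point1}, $\Gamma(1)$ is Rayleigh$(\sqrt2)$ with all moments finite, and given $\Gamma(1)$ the slope $\Gamma'(1)$ is Uniform$[0,\Gamma(1)]$, so $Y_0 = \Gamma(1) - \Gamma'(1)\in[0,\Gamma(1)]$ also has all moments finite — this is routine but should be stated. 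Alternatively, one could shortcut the whole argument by noting $Y_n^3/S_n = Y_n\cdot(Y_n^2/S_n)$ with $Y_n\to 0$ geometrically (Corollary \ref{cor:perpetuity}) and $\Expect{Y_n^2/S_n}$ bounded (from geometric ergodicity, Corollary \ref{cor:GE}), then applying Cauchy--Schwarz; but that route gives a weaker rate, so I would present the explicit unrolling to obtain the sharp $3^{-n}$ in \eqref{eq:decay}.
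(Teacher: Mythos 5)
Your argument follows essentially the same line as the paper's: both reduce to the recursion for \(Y_n^3/S_n\) induced by \eqref{eq:S} together with the perpetuity \eqref{eq:perpetuity}, and both pin down the \(3^{-n}\) rate from the moments \(\Expect{(1-\sqrt{U})^3}=\tfrac1{10}\) and \(\Expect{1-\sqrt{U}}=\tfrac13\) combined with finiteness of moments of \(Y_0\). The paper writes the one-step recursion \(\Expect{3^n Y_n^3/S_n} = \tfrac{3}{10}\Expect{3^{n-1}Y_{n-1}^3/S_{n-1}} + \tfrac{12}{10}\Expect{Y_0}\) (using the martingale identity \(\Expect{3^{n-1}Y_{n-1}}=\Expect{Y_0}\)) and iterates; you unroll \(Y_n^2/S_n\) fully and then multiply by \(Y_n\), which is the same computation. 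One arithmetic slip worth correcting: after multiplying the boundary term of your unrolled \(Y_n^2/S_n\) by \(Y_n\), the factor is \(\prod_{m=1}^n(1-\sqrt{U_m})^3\), so its expected contribution is \(\Expect{Y_0^3/S_0}\cdot 10^{-n}\), not \(6^{-n}\); you correctly note the exponent becomes \(3\) throughout, but then used \(\Expect{(1-\sqrt{U})^2}=\tfrac16\) instead of \(\Expect{(1-\sqrt{U})^3}=\tfrac1{10}\). This is harmless for the conclusion since \(10^{-n}\) is still dominated by the \(3^{-n}\) main term. Your Borel--Cantelli step for the almost-sure clause is the standard way to fill in what the paper leaves implicit.
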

\begin{proof}
Applying \eqref{eq:S}, 
\begin{multline*}
 \Expect{3^n\frac{Y_n^3}{S_n}}\quad=\quad
 \Expect{3^n\frac{Y_n^3}{Y_{n-1}^3} \left(\frac{Y_{n-1}^3}{S_{n-1}} + 4 Y_{n-1}E_n \right)}\\
\quad=\quad
 \Expect{3^n(1-\sqrt{U_n})^3 \left( \frac{Y_{n-1}^3}{S_{n-1}} + 4  Y_{n-1} E_n\right)}
\quad=\quad
\frac{3}{10} \Expect{3^{n-1}\frac{Y_{n-1}^3}{S_{n-1}}+ 4Y_0}\\
\quad=\quad
\Expect{\left(\frac{3}{10}\right)^n \frac{Y_{0}^3}{S_{0}}+ 4\left(\left(\frac{3}{10}\right)^n+\left(\frac{3}{10}\right)^{n-1}+\dots+\frac{3}{10}\right)Y_0}\\
\quad\leq\quad 
\Expect{\left(\frac{3}{10}\right)^n \frac{Y_{0}^3}{S_{0}}+ \frac{12}{7}Y_0}\,.
\end{multline*}
But \(S_0=1\) while \(Y_0\) has a Rayleigh\((\sqrt{2})\) distribution and therefore has finite moments of all orders.
\end{proof}

\section{Flow in the centre of the city}\label{sec:flow}
From the above work, we can represent the flow at the centre of the city in terms of the seminal curves.
Here we establish an explicit upper bound on the \(L^1\) error that arises if we use only finite portions of the seminal curves. 

Consider the tail-sum \eqref{eq:correction1}, from which we can obtain an \(L^1\) upper bound on the error term. 
This can be expressed in terms of the quantities studied in the dynamical system given by \eqref{eq:S} and \eqref{eq:Gamma'}:
\begin{equation}
 \sum_{n=N}^\infty \Leb_2(\widetilde{\Delta}^+_n) \Leb_2(\Delta^+_n)
\quad=\quad
\quarter \sum_{n=N}^\infty (1-S_{n+1})^2 Y_{n}^2 \left(\frac{\Gamma^\prime(S_{n+1})}{\Gamma^\prime(S_{n})}-1\right)  
\end{equation}
since
\begin{align*}
 \Leb_2(\widetilde{\Delta}^+_n) \quad&=\quad \half Y_n \times Y_n / \Gamma^\prime(S_n)\,,\\
 \Leb_2(\Delta^+_n) \quad&=\quad\half (1-S_{n+1}) \times (\Gamma'(S_{n+1})-\Gamma'(S_n)))(1-S_{n+1})\,.
\end{align*}

We now estimate the \(n^\text{th}\) summand of \eqref{eq:correction1} for any \(n\geq N\),
using the fact that \(0<S_n\leq1\), the above details about the stochastic dynamics, and the fact that \(\Gamma^\prime\) is monotonically decreasing,
\begin{multline*}
 (1-S_{n+1})^2 Y_{n}^2 \left(\frac{\Gamma^\prime(S_{n+1})}{\Gamma^\prime(S_{n})}-1\right) \quad\leq\quad
Y_{n}^2 \left(\frac{\Gamma^\prime(S_{n+1})}{\Gamma^\prime(S_{n})}-1\right)
\quad=\quad
Y_{n}^3 \left(\frac{\sqrt{U_{n+1}}}{\Gamma^\prime(S_{n})S_{n+1}}\right)\\
\quad=\quad
\frac{Y_n^3\sqrt{U_{n+1}}}{\Gamma^\prime(S_n)}\left(\frac{4}{Y_n^2}E_{n+1} + \frac{4}{Y_{n-1}^2}E_{n} +\ldots+ \frac{4}{Y_{N}^2}E_{N+1} + \frac{1}{S_N}\right)
\\
\quad\leq\quad
\frac{4Y_n\sqrt{U_{n+1}}}{\Gamma^\prime(S_N)}
\left(E_{n+1} + 
\left(\frac{Y_n}{Y_{n-1}}\right)^2 E_{n} 
+\ldots+ 
\left(\frac{Y_n}{Y_{N}}\right)^2 E_{N+1}
\right)  
+ \frac{Y_n^3\sqrt{U_{n+1}}}{\Gamma^\prime(S_N)S_N}
\end{multline*}
Now take conditional expectations given \(\Gamma'(S_N)\), \(S_N\), \(Y_N\) and use the independence of the \(E_n\) and the \(Y_n/Y_N\) (for \(n\geq N\)) to convert
the conditional expectations into absolute expectations,
using also the product expression \eqref{eq:perpetuity} for the perpetuity \(Y\),
and the fact that \(\Gamma'(S_N)\geq\Gamma'(S_0)\) for \(N\geq0\):
\begin{multline*}
 \Expect{(1-S_{n+1})^2 Y_{n}^2 \left(\frac{\Gamma^\prime(S_{n+1})}{\Gamma^\prime(S_{n})}-1\right) \;\Big|\; \Gamma'(S_N), S_N, Y_N}
\\\quad\leq\quad
\frac{2}{3}\frac{Y_N}{\Gamma^\prime(S_N)}\left(
4\Expect{ \frac{Y_n}{Y_N}
\left(1 + 
\left(\frac{Y_n}{Y_{n-1}}\right)^2 
+\ldots+ 
\left(\frac{Y_n}{Y_{N}}\right)^2 
\right)}
+ 
\Expect{\left(\frac{Y_n}{Y_N}\right)^3} \frac{Y_N^2 }{S_N}
\right)\\
\quad\leq\quad
\frac{2}{3}\frac{Y_N}{\Gamma^\prime(S_N)}\left(
4\left( 1 + \frac{3}{10} + \ldots + \left(\frac{3}{10}\right)^{n-N}\right)\left(\frac{1}{3}\right)^{n-N} 
+ 
\left(\frac{1}{10}\right)^{n-N}\frac{Y_N^2 }{S_N} 
\right)\\
\quad\leq\quad
\frac{2}{3} \frac{Y_N}{\Gamma^\prime(S_N)}\left(
\frac{40}{7} \left(\frac{1}{3}\right)^{n-N}  + \frac{Y_N^2 }{S_N}  \left(\frac{1}{10}\right)^{n-N}
\right)\,.
\end{multline*}

Hence
\begin{multline}\label{eq:final-expression}
  \Expect{\sum_{n=N}^\infty \Leb_2(\widetilde{\Delta}^+_n) \Leb_2(\Delta^+_n)\;\Big|\; \Gamma'(S_N), S_N, Y_N}
\quad\leq\quad
\frac{5}{\Gamma^\prime(S_N)} \left(\frac{2}{7} Y_N  + \frac{1}{27}\frac{Y_N^3 }{S_N}\right)\\
\quad\leq\quad
\frac{5}{\Gamma^\prime(S_0)} \left(\frac{2}{7} Y_N  + \frac{1}{27}\frac{Y_N^3 }{S_N}\right)\,. 
\end{multline}
Now \(Y_N\) and \(Y_N^3/S_N\) almost surely converge geometrically fast to zero (use Corollaries \ref{cor:perpetuity} and \ref{cor:UP}). 
Hence almost surely the above conditional expectation will tend to zero as \(N\to\infty\).
Moreover we have the following explicit \(L^1\) error bound, converging geometrically fast to zero with \(N\).
\begin{thm}\label{thm:truncation}
The \(L^1\) error of the approximation
 \begin{multline}\label{eq:truncation}
2F \quad=\quad \int_{Q_+}\int_{Q_-} \mathbb{I}_{[\origin\in\partial\Cell(\xx,\yy)]}\d\xx\d\yy
\quad\approx\quad
 \left(\int_{0}^1 \Gamma_-(-s)\d s\right)\times\left(\int_{0}^1 \Gamma_+(s)\d s\right)\\
+ \sum_{n=0}^N \Leb_2(C^+_n) \Leb_2(\Delta^+_n)
+ \sum_{n=0}^N \Leb_2(C^-_n) \Leb_2(\Delta^-_n)
\end{multline}
is bounded above by 
\begin{equation}\label{eq:conditional-error}
\frac{20}{7} \times 3^{-N} + \frac{20}{27}\times 6^{-N}
\end{equation}
\end{thm}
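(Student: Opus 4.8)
The plan is to identify the $L^{1}$ error of \eqref{eq:truncation} with the mean of the discarded tails of the two sums in \eqref{eq:full-answer}, and to control those tails through the conditional bound \eqref{eq:final-expression}. Since every summand in \eqref{eq:full-answer} is non-negative and \eqref{eq:truncation} retains exactly the terms $n=0,\dots,N$, the error equals $\sum_{n=N+1}^{\infty}\Leb_2(C^{+}_{n})\Leb_2(\Delta^{+}_{n})+\sum_{n=N+1}^{\infty}\Leb_2(C^{-}_{n})\Leb_2(\Delta^{-}_{n})$. Using the containment $C^{\pm}_{n}\subseteq\widetilde{\Delta}^{\pm}_{n}$ and the distributional symmetry $x\mapsto-x$ of $\Pi_\infty$ (which swaps the two contributions), its expectation is at most $2\,\Expect{\sum_{n=N+1}^{\infty}\Leb_2(\widetilde{\Delta}^{+}_{n})\Leb_2(\Delta^{+}_{n})}$. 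Applying \eqref{eq:final-expression} with $N$ replaced by $N+1$ and taking total expectation (everything being non-negative) then shows the mean error is at most $\tfrac{20}{7}\Expect{Y_{N+1}/\Gamma'(S_{N+1})}+\tfrac{10}{27}\Expect{Y_{N+1}^{3}/(\Gamma'(S_{N+1})S_{N+1})}$, so it suffices to prove the first expectation is at most $3^{-N}$ and the second at most $2\cdot6^{-N}$.

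The one delicate point is the factor $1/\Gamma'(S_{N+1})$. Its crude deterministic majorant $1/\Gamma'(S_0)=1/\Gamma'(1)$ is useless: by Lemma \ref{lem:1point1} the density of $\Gamma'(1)$ is strictly positive at the origin, so $\Expect{1/\Gamma'(1)}=\infty$, and one must resist passing to the $\Gamma'(S_0)$ form of \eqref{eq:final-expression}. Instead I would use that, by \eqref{eq:Gamma'} and $\Gamma'(S_0)\ge 0$, the slope $\Gamma'(S_{N+1})$ is the sum of $\Gamma'(S_0)$ and the strictly positive increments $\Gamma'(S_{k+1})-\Gamma'(S_{k})=\tfrac{Y_{k}}{S_{k+1}}\sqrt{U_{k+1}}$, so it dominates any single one of them — and the two tail quantities call for different choices of increment. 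For $\Expect{Y_{N+1}/\Gamma'(S_{N+1})}$, keeping the \emph{first} increment and using $0<S_{1}\le 1$ gives $\Gamma'(S_{N+1})\ge Y_{0}\sqrt{U_{1}}$, so by the perpetuity identity \eqref{eq:perpetuity}, $Y_{N+1}/\Gamma'(S_{N+1})\le U_{1}^{-1/2}\prod_{m=1}^{N+1}(1-\sqrt{U_{m}})$; since the $U_{m}$ are independent, the mean factorises as $\Expect{U_{1}^{-1/2}-1}\,(\Expect{1-\sqrt{U}})^{N}=1\cdot 3^{-N}$, using $\Expect{U^{-1/2}}=2$ and $\Expect{1-\sqrt{U}}=\tfrac13$.

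For $\Expect{Y_{N+1}^{3}/(\Gamma'(S_{N+1})S_{N+1})}$ the right choice is the \emph{last} increment, $\Gamma'(S_{N+1})\ge\tfrac{Y_{N}}{S_{N+1}}\sqrt{U_{N+1}}$, precisely because it cancels the awkward $1/S_{N+1}$: with $Y_{N+1}=Y_{N}(1-\sqrt{U_{N+1}})$ this gives $Y_{N+1}^{3}/(\Gamma'(S_{N+1})S_{N+1})\le Y_{N+1}^{2}(1-\sqrt{U_{N+1}})/\sqrt{U_{N+1}}$, and after substituting \eqref{eq:perpetuity} and bounding $Y_{0}\le\Gamma(1)$ the mean is at most $\Expect{\Gamma(1)^{2}}\,(\Expect{(1-\sqrt{U})^{2}})^{N}\,\Expect{(1-\sqrt{U})^{3}/\sqrt{U}}=4\cdot 6^{-N}\cdot\tfrac12=2\cdot 6^{-N}$, using $\Expect{\Gamma(1)^{2}}=4$ (the second moment of the Rayleigh$(\sqrt{2})$ law of Lemma \ref{lem:1point1}), $\Expect{(1-\sqrt{U})^{2}}=\tfrac16$ and $\Expect{(1-\sqrt{U})^{3}/\sqrt{U}}=\tfrac12$. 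Feeding these two estimates back in, the mean error is at most $\tfrac{20}{7}\,3^{-N}+\tfrac{20}{27}\,6^{-N}$, which is \eqref{eq:conditional-error}.

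I expect the main obstacle to be exactly this non-integrability of $1/\Gamma'(1)$, together with the correspondingly subtle choice forced on us: one must lower-bound $\Gamma'(S_{N+1})$ not by $\Gamma'(S_0)$ but by a single, well-selected slope-increment — the earliest for the linear term, so as to harvest the full geometric rate against the perpetuity product, and the latest for the cubic term, so as to absorb $1/S_{N+1}$. Once that choice is made, \eqref{eq:perpetuity} turns $Y_{N+1}$ into a product of independent factors and the remaining expectations are elementary one-variable integrals.
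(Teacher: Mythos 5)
Your proposal is correct and follows essentially the same route as the paper: identify the $L^1$ error with the mean of the discarded tails, dominate $C^\pm_n$ by $\widetilde\Delta^\pm_n$, invoke the conditional bound \eqref{eq:final-expression} at index $N+1$, and then estimate $\Expect{Y_{N+1}/\Gamma'(S_{N+1})}$ and $\Expect{Y_{N+1}^3/(\Gamma'(S_{N+1})S_{N+1})}$ by lower-bounding $\Gamma'(S_{N+1})$ using the first and last slope increments respectively, together with the perpetuity factorisation. Your explicit warning that one must not pass to the $1/\Gamma'(S_0)$ majorant because $\Expect{1/\Gamma'(1)}=\infty$ is a genuinely useful remark that the paper leaves implicit, but the underlying argument and constants are identical.
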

\begin{proof}
 By the previous work, we obtain a bound on the approximation error for \eqref{eq:truncation} by replacing \(N\) for \(N-1\) in the sum of (a) the term
\[
 \frac{5}{\Gamma^\prime(S_N)} \left(\frac{2}{7} Y_N  + \frac{1}{27}\frac{{Y_N}^3 }{S_N}\right)
\]
and (b) the corresponding term  for the left seminal curve \(\Gamma_-\) as opposed to \(\Gamma_+=\Gamma\). We now estimate the quantity in (a).

In the first place, observe that Equation \eqref{eq:Gamma'}, and the fact that \(Y_0\leq\Gamma(1)\), allows us to deduce that
\begin{multline*}
 \Expect{\frac{Y_N^3}{\Gamma'(S_N)S_N}}\quad\leq\quad \Expect{\frac{Y_N^3}{Y_{N-1}\sqrt{U_N}}} \\
\quad=\quad  \Expect{\frac{(1-\sqrt{U_N})^3}{\sqrt{U_N}}} \left(\Expect{(1-\sqrt{U_1})^2}\right)^{N-1} \Expect{Y_0^2}\\
\quad\leq\quad \frac{1}{2} \times 6^{-(N-1)} \Expect{\Gamma(1)^2}
\quad=\quad 2 \times 6^{-(N-1)}\,,
 \end{multline*}
where the last step uses the fact that \(\Gamma(1)\) has a Rayleigh\((\sqrt{2})\) distribution: see \eqref{eq:1point1}. 
In the second place, consider
\begin{multline*}
 \Expect{\frac{Y_N}{\Gamma'(S_N)}} \quad\leq\quad \Expect{\frac{Y_N}{\Gamma'(S_1)}}
\quad\leq\quad 
\Expect{\frac{Y_N S_1}{Y_0 \sqrt{U_1}}}
\quad\leq\quad 
\Expect{\frac{Y_N}{Y_0 \sqrt{U_1}}}\\
\quad=\quad
\left(\Expect{1-\sqrt{U_2}}\right)^{N-1} \Expect{\frac{1-\sqrt{U_1}}{\sqrt{U_1}}}
\quad=\quad
3^{-(N-1)}\,.
\end{multline*}
The result follows by calculating the contribution from (a) and then doubling to account for the contribution from (b).

\end{proof}
A variation on this argument gives a geometrically decaying \emph{conditional} \(L^1\) error bound given \(S_N\),\(Y_N\) and \(\Gamma'(S_N)\)
and their counterparts for the left seminal curve \(\Gamma_-\) as opposed to \(\Gamma_+\). 
However this conditional \(L^1\) error bound is rather inelegant, since the quantities \(C^\pm_n\) in \eqref{eq:truncation} depend on \(S_m\),\(Y_m\) and \(\Gamma'(S_m)\) for \(m\geq N\).

We can summarize these results as follows: the \(4\)-volume given by \eqref{eq:totalflow} can be approximated to any desired accuracy in \(L^1\), 
based on construction of initial segments of the seminal curves \((\Gamma_-(s):-1\leq s\leq 1/m_-)\) and \((\Gamma_+(s):1/m_+\leq s \leq 1)\) 
and their lower half-plane counterparts, for suitable \(m_\pm\),
using Theorem \ref{thm:full-answer}, Lemma \ref{lem:1point1}, Theorem \ref{thm:dynamics} and Theorem \ref{thm:truncation}.

\section{Conclusion}\label{sec:conclusion}
The asymptotic traffic flow in a Poissonian city has been represented as the \(4\)-volume of a stochastic geometric object in \cite{Kendall-2011b}, 
but the object itself (an unbounded region in \(\Reals^4\)) is somewhat intransigent.
The above work shows how to represent the volume in terms of integrals involving the strictly monotonic continuous concave seminal curves \(\Gamma_\pm\),
and furthermore establishes approximations which supply the theory necessary to approximate and effectively simulate the \(4\)-volume with explicit \(L^1\) error.

Work for a future occasion includes investigation of the amount of computational effort required to achieve stage \(N\) approximations corresponding to \eqref{eq:truncation}.
This is a non-trivial task, since account must be taken of the effort required to approximate each of the \(C^\pm_n\) for \(n=0,1,\ldots,N\).

It is natural then to ask whether it might be possible to translate this work into construction of a perfect simulation algorithm. 
For example M{\o}ller's nearly perfect simulation algorithm for conditionally specified models \cite{Moller-1999} (simulating to within floating point error)
was improved by Wilson to an efficient and exactly perfect simulation algorithm using multishift coupling \cite{Wilson-2000b}. 
Certainly Fill and Huber \cite{FillHuber-2010} have shown how to use dominated coupling from the past to generate exact draws from recursive definitions of perpetuities 
(see also work of Blanchet and Sigman \cite{BlanchetSigman-2011});
this is suggestive, since the system for \(\Gamma'(S_n)\) and \(S_n\) (\eqref{eq:S} and \eqref{eq:Gamma'}) is a similar if more complicated recursive construction.
However, in the present case, interest lies in integral quantities derived from \eqref{eq:S} and \eqref{eq:Gamma'}, 
and it is not obvious how to generate a perfect simulation algorithm from the approximate simulation algorithm implied by the results of Theorem \ref{thm:full-answer}, 
Lemma \ref{lem:1point1}, Theorem \ref{thm:dynamics} and Theorem \ref{thm:truncation}. 
The matter of whether or not such a perfect simulation algorithm exists is left as a significant open question for future work.



\bibliographystyle{plainnat}
\bibliography{Kendall_JAP51A}

\begin{thebibliography}{12}
\providecommand{\natexlab}[1]{#1}
\providecommand{\url}[1]{\texttt{#1}}
\expandafter\ifx\csname urlstyle\endcsname\relax
  \providecommand{\doi}[1]{doi: #1}\else
  \providecommand{\doi}{doi: \begingroup \urlstyle{rm}\Url}\fi

\bibitem[Aldous and Bhamidi(2010)]{AldousBhamidi-2010}
David~J Aldous and Shankar Bhamidi.
\newblock {Edge flows in the complete random-lengths network}.
\newblock \emph{Random Structures \& Algorithms}, 37\penalty0 (3):\penalty0
  271--311, October 2010.
\newblock ISSN 10429832.
\newblock \doi{10.1002/rsa.20306}.
\newblock URL \url{http://doi.wiley.com/10.1002/rsa.20306}.

\bibitem[Aldous and Kendall(2008)]{AldousKendall-2007}
David~J Aldous and Wilfrid~Stephen Kendall.
\newblock {Short-length routes in low-cost networks via Poisson line patterns}.
\newblock \emph{\emph{\AAP}}, 40\penalty0 (1):\penalty0 1--21, March 2008.
\newblock URL \url{http://arxiv.org/abs/math/0701140}.

\bibitem[Aldous et~al.(2009)Aldous, McDiarmid, and
  Scott]{AldousMcDiarmidScott-2009}
David~J Aldous, Colin McDiarmid, and Alex Scott.
\newblock {Uniform multicommodity flow through the complete graph with random
  edge-capacities}.
\newblock \emph{Operations Research Letters}, 37\penalty0 (5):\penalty0
  299--302, September 2009.
\newblock ISSN 01676377.
\newblock \doi{10.1016/j.orl.2009.04.008}.
\newblock URL \url{http://dx.doi.org/10.1016/j.orl.2009.04.008}.

\bibitem[Blanchet and Sigman(2011)]{BlanchetSigman-2011}
Jose~H Blanchet and Karl Sigman.
\newblock {On exact sampling of stochastic perpetuities}.
\newblock \emph{\emph{\JAP}}, 48\penalty0 (August):\penalty0 165--182, 2011.
\newblock URL \url{http://projecteuclid.org/euclid.jap/1318940463}.

\bibitem[Chiu et~al.(2013)Chiu, Stoyan, Kendall, and
  Mecke]{ChiuStoyanKendallMecke-2013}
Sung~Nok Chiu, Dietrich Stoyan, Wilfrid~Stephen Kendall, and Joseph Mecke.
\newblock \emph{{Stochastic Geometry and Its Applications}}.
\newblock John Wiley \& Sons, 2013.
\newblock ISBN 1118658248.
\newblock URL
  \url{http://books.google.co.uk/books/about/Stochastic\_Geometry\_and\_Its\_Applications.html?id=GCRI8Q-RUEkC\&pgis=1}.

\bibitem[Fill and Huber(2010)]{FillHuber-2010}
James~Allen Fill and Mark~L Huber.
\newblock {Perfect simulation of Vervaat perpetuities}.
\newblock \emph{Electronic Journal of Probability}, 15\penalty0 (Paper
  4):\penalty0 96--109, 2010.
\newblock URL
  \url{http://www.emis.ams.org/journals/EJP-ECP/\_ejpecp/include/getdoc1c58.pdf?id=5434\&amp;article=2048\&amp;mode=pdf}.

\bibitem[Kendall(2011)]{Kendall-2011b}
Wilfrid~Stephen Kendall.
\newblock {Geodesics and flows in a Poissonian city}.
\newblock \emph{Annals of Applied Probability}, 21\penalty0 (3):\penalty0
  801--842, October 2011.
\newblock URL \url{http://arxiv.org/abs/0910.5115}.

\bibitem[Meyn and Tweedie(1993)]{MeynTweedie-1993}
Sean~P Meyn and Richard~L Tweedie.
\newblock \emph{{Markov Chains and Stochastic Stability}}.
\newblock Springer-Verlag, New York, 1993.

\bibitem[M{\o}ller(1999)]{Moller-1999}
Jesper M{\o}ller.
\newblock {Perfect simulation of conditionally specified models}.
\newblock \emph{\emph{\JRSS} (Series B: Methodological)}, 61\penalty0
  (1):\penalty0 251--264, 1999.
\newblock ISSN 1369-7412.

\bibitem[Roberts and Tweedie(1996)]{RobertsTweedie-1996b}
Gareth~O Roberts and Richard~L Tweedie.
\newblock {Geometric convergence and central limit theorems for
  multidimensional Hastings and Metropolis algorithms}.
\newblock \emph{Biometrika}, 83:\penalty0 96--110, 1996.

\bibitem[Vervaat(1979)]{Vervaat-1979}
Wim Vervaat.
\newblock {On a stochastic difference equation and a representation of
  nonnegative infinitely divisible random variables}.
\newblock \emph{\emph{\AAP}}, 11\penalty0 (4):\penalty0 750--783, 1979.
\newblock ISSN 0001-8678.

\bibitem[Wilson(2000)]{Wilson-2000b}
David~Bruce Wilson.
\newblock {Layered Multishift Coupling for use in Perfect Sampling Algorithms
  (with a primer on CFTP)}.
\newblock In Neil Madras, editor, \emph{Monte Carlo Methods (Toronto, Ontario,
  1998)}, volume~26 of \emph{Fields Institute Communications}, pages 143--179.
  American Mathematical Society, 2000.

\end{thebibliography}

\vfill

\leftline{\textsc{Department of Statistics, University of Warwick, Coventry CV4 7AL, UK}}
\leftline{\emph{Email address:} \url{w.s.kendall@warwick.ac.uk}}

\end{document}